\newcommand{\field}[1]{\mathbb{#1}}
\newcommand{\R}{\field{R}} % real numbers
\DeclareMathOperator{\st}{s.t.}
\newcommand{\Ascr}{{\cal A}}
\newcommand{\Fscr}{{\cal F}}
\newcommand{\Sscr}{{\cal S}}
\newcommand{\Tscr}{{\cal T}}
\newcommand{\Ab}{{\mathbb{A}}}
\newcommand{\Xb}{{\mathbb{X}}}
\newcommand{\Db}{{\mathbb{D}}}
\newcommand\argmax{\mathop{\mbox{{\rm argmax}}}\limits}
\newcommand\argsup{\mathop{\mbox{{\rm argsup}}}\limits}
\newtheorem{theorem}{Theorem}
\newtheorem{lemma}{Lemma}
\newtheorem{property}{Property}
\newtheorem{example}{Example}
\numberwithin{equation}{section}
\newtheorem*{le:epsclose}{Lemma~\ref{le:epsclose}}
\newtheorem*{le:tvrate}{Lemma~\ref{le:tvrate}}
\newtheorem*{le:bad}{Lemma~\ref{le:bad}}
\newtheorem*{th:zerofrac}{Theorem~\ref{th:zerofrac}}
\newtheorem*{th:opt}{Theorem~\ref{th:opt}}
\newtheorem*{t2}{Theorem 2}
\newtheorem*{l1}{Lemma 1}
\newtheorem*{l2}{Lemma 2}
\newtheorem*{l5}{Lemma 5}
\newcommand{\GETS}{\leftarrow}
\title{Irrevocable Multi-Armed Bandit Policies}
\author{Vivek F. Farias \footnote{Sloan School of Management and Operations Research Center, Massachusetts Institute of Technology, email :vivekf@mit.edu}\\
\and
Ritesh Madan \footnote{Qualcomm-Flarion Technologies, email :rkmadan@stanfordalumni.org}\\
}
\begin{document}
\maketitle
%\def\thepage {} % Kill pagenumbering
%\thispagestyle{empty}

%\iffalse
%\begin{quote}
%\begin{center}
%{\bf Abstract}
%\end{center}
%\fi

\begin{abstract}
This paper considers the multi-armed bandit problem with multiple
simultaneous arm pulls. We develop a new `irrevocable' heuristic for
this problem. In particular, we do not allow recourse to arms that
were pulled at some point in the past but then discarded. This
irrevocable property is highly desirable from a practical
perspective. As a consequence of this property, our heuristic
entails a minimum amount of `exploration'. At the same time, we find
that the price of irrevocability is limited for a broad useful class
of bandits we characterize precisely. This class includes one of the
most common applications of the bandit model, namely, bandits whose
arms are `coins' of unknown biases. Computational experiments with a
generative family of large scale problems within this class indicate
losses of up to $5-10\%$ relative to an upper bound on the
performance of an optimal policy with no restrictions on
exploration. We also provide a worst-case theoretical analysis that
shows that for this class of bandit problems, the price of
irrevocability is uniformly bounded: our heuristic earns expected
rewards that are always within a factor of $1/8$ of an optimal
policy with no restrictions on exploration. In addition to being an
indicator of robustness across all parameter regimes, this analysis
sheds light on the structural properties that afford a low price of
irrevocability.\end{abstract}

\newpage
\pagenumbering {arabic}
\section{Introduction}

Consider the operations of a `fast-fashion' retailer such as Zara or
H\&M. Such retailers have developed and invested in merchandize
procurement strategies that permit lead times for new fashions as
short as two weeks. As a consequence of this flexibility, such
retailers are able to adjust the assortment of products offered on
sale at their stores  to quickly adapt to popular fashion trends. In
particular, such retailers use weekly sales data to refine their
estimates of an item's popularity, and based on such revised
estimates weed out unpopular items, or else re-stock demonstrably
popular ones on a week-by-week basis. In sharp contrast, traditional
retailers such as J.C. Penney or Marks and Spencer face lead times
on the order of several months. As such these retailers need to
predict popular fashions months in advance and are allowed virtually
no changes to their product assortments over the course of a sales
season which is typically several months in length. Understandably,
this approach is not nearly as successful at identifying high
selling fashions and also results in substantial unsold inventories
at the end of a sales season. In view of the great deal of a-priori
uncertainty in the popularity of a new fashion and the speed at
which fashion trends evolve, the fast-fashion operations model is
highly desirable and emerging as the de-facto operations model for
large fashion retailers.

Among other things, the fast-fashion model relies crucially on an
effective technology to learn from purchase data, and adjust product
assortments based on such data. Such a technology must strike a
balance between `exploring' potentially successful products and
`exploiting' products that are demonstrably popular. A convenient
mathematical model within which to design algorithms capable of
accomplishing such a task is that of the multi-armed bandit. While
we defer a precise mathematical discussion to a later section, a
multi-armed bandit consists of multiple (say $n$) `arms', each
corresponding to a \emph{Markov Decision Process}. As a special
case, one may think of each arm as an independent  binomial coin with an
uncertain bias specified via some prior distribution. At each point
in time, one may `pull' up to a certain number of arms (say $k < n$)
simultaneously, or equivalently, toss up to a certain number of
coins. For each tossed coin, we earn a reward proportional to its realization and are able to refine our estimate of its bias based on this realization. We neither learn about, nor earn rewards
from coins that are not tossed. The multi-armed bandit problem
requires finding a policy that adaptively selects $k$ arms to pull
at every point in time with a view to maximizing total expected
reward earned over some finite time horizon or alternatively,
discounted rewards earned over an infinite horizon or perhaps, even long term
average rewards.

With multiple simultaneous pulls allowed, the multi-armed bandit
problem we have described is computationally hard. A popular and
empirically successful heuristic for this problem was proposed
several decades ago by Whittle. Whittle's heuristic produces an
index for every arm based on the state of that arm and simply calls
for pulling the $k$ arms with the highest index at every point in
time. While it has been empirically and computationally observed
that Whittle's heuristic provides excellent performance, the
heuristic typically calls for frequent changes to the set of arms
pulled that might, in hindsight, have been unnecessary. For
instance, in the retail context, such a heuristic may choose to
discard from the assortment a product presently being offered for
sale in favor of a new product whose popularity is not known
precisely. Later, the heuristic may well choose to re-introduce the
discarded product. While such exploration may appear necessary if
one is to discover profitable bandit arms (or popular products),
enabling such a heuristic in practice will typically call for a
great number of adjustments to the product assortment -- a
requirement that is both expensive and undesirable. This begs the
following question: Is it possible to design a heuristic for the
multi-armed bandit problem that comes close to being optimal with a
minimal number of adjustments to the set of arms pulled over time?

This paper introduces a new `irrevocable' heuristic for the
multi-armed bandit problem we call the packing
heuristic. The packing heuristic establishes a static ranking of
bandit arms based on a measure of their potential value relative to the time required to realize that value, and pulls arms in
the order prescribed by this ranking. For an arm currently being
pulled, the heuristic may either choose to continue pulling that arm
in the next time step or else discard the arm in favor of the next
highest ranked arm not currently being pulled. Once discarded, an
arm will \emph{never} be chosen again; hence the term irrevocable. Irrevocability is an attractive structural constraint to impose on arm selection policies in a number of practical applications of the bandit model such as the dynamic assortment problem we have discussed or sequential drug trials where recourse to drugs whose testing was discontinued in the past is socially unacceptable. It is clear that an irrevocable heuristic makes a minimal number of changes to the
set of arms pulled. What is perhaps surprising, is that the
restriction to an irrevocable policy is typically far less expensive than one
might expect. In particular, we demonstrate via a theoretical analysis and computational
experiments that the use of the packing
heuristic incurs a small performance loss relative to  an optimal bandit policy with \emph{no restriction} on exploration, i.e. an optimal strategy that is \emph{allowed recourse} to arms that were pulled but discarded in the past.

More specifically, the present work makes the following
contributions:

\begin{itemize}
\item We introduce a new `irrevocable' heuristic, the packing heuristic, for the multi-armed bandit problem with multiple simultaneous arm-pulls. The packing heuristic is irrevocable in that if an arm being pulled is at some point discarded from the set of arms being pulled, it is never pulled again. At the same time, the performance loss incurred relative to an optimal, potentially non-irrevocable, control policy is limited. In particular, computational experiments with the packing heuristic for a generative family of large scale bandit problems indicate performance losses of up to about a few percent relative to an upper bound on the performance of an optimal policy with no restrictions on exploration. This level of performance suggests that the packing heuristic is likely to serve as a viable heuristic for the multi-armed bandit with multiple plays even when irrevocability is not a concern.

In addition to our computational study, we are able to demonstrate a \emph{uniform} bound on the price of irrevocability for a broad, interesting class of bandits. This class includes most commonly used applications of the bandit model such as bandits whose arms are `coins' of unknown biases. We demonstrate that the packing heuristic earns expected rewards that are always within a factor of $1/8$ of an optimal, potentially non-irrevocable policy. Such a uniform bound guarantees robust performance across all parameter regimes; in particular, the packing heuristic will `track' the performance of an optimal, potentially non-irrevocable policy across all parameter regimes. In addition, our analysis sheds light on the structural properties that afford the surprising efficacy of the irrevocable policies considered here.
\item In the interest of practical applicability, we develop a fast combinatorial implementation of the packing heuristic. Assuming that an individual arm has $O(\Sigma)$ states, and given a time horizon of $T$ steps, optimal solution to the multi-armed bandit problem under consideration requires $O(\Sigma^n T^n)$ computations. The main computational step in the packing heuristic calls for the one time solution of a linear program with $O(n\Sigma T)$ variables, whose solution via a generic LP solver requires $O(n^3\Sigma^3 T^3)$ computations. We develop a novel combinatorial algorithm that solves this linear program in $O(n\Sigma^2T\log T)$ steps by solving a sequence of dynamic programs for each bandit arm. The technique we develop here is potentially of independent interest for the solution of `weakly coupled' optimal control problems with coupling constraints that must be met in expectation. Employing this solution technique, our heuristic requires a total of $O(n\Sigma^2 \log T)$ computations per time step amortized over the time horizon. In comparison, the simplest theoretically sound heuristics in existence for this multi-armed bandit problem (such as Whittle's heuristic) require $O(n\Sigma^2 T)$ computations per time step. As such, we establish that the packing heuristic is computationally attractive.
\end{itemize}

\subsection{Relevant Literature}

The multi-armed bandit problem has a rich history, and a number of excellent references (such as \cite{Gittins89}) provide a thorough treatment of the subject. We review here literature especially relevant to the present work. In the case where
$k=1$, that is, allowing for a single arm to be pulled in a given
time step, \cite{Gittins74} developed an elegant index based policy
that was shown to be optimal for the problem of maximizing
discounted rewards over an infinite horizon. Their index policy is
known to be suboptimal if one is allowed to pull more than a single
arm in a given time step. \cite{Whittle88} developed a simple index
based heuristic for a more general bandit problem (the `restless'
bandit problem) allowing for multiple arms to be pulled in a given
time step. While his original paper was concerned with maximizing
long-term average rewards, his heuristic is easily adapted to other
objectives such as discounted infinite horizon rewards or expected
rewards over a finite horizon (see for instance \cite{Gallien07, BertsimasNinoMora00}).
\cite{Weiss92} subsequently established that under
suitable conditions, Whittle's heuristic was asymptotically optimal
(in a regime where $n$ and $k$ go to infinity keeping $n/k$
constant). Whittle's heuristic may be viewed as a modification to the optimal control policy one obtains upon relaxing the
requirement that at most $k$ arms be pulled in a given time step to
requiring that at most $k$ arms be pulled in expectation in any
given time step. The packing heuristic we introduce is motivated by a similar relaxation. In particular, we restrict attention to policies that entail a total of at most $kT$ arm pulls over the
entire horizon in expectation while allowing for no more than $T$
pulls of any given arm. Where we differ substantially from Whittle's
heuristic is the manner in which we construct a feasible policy (one
where at most $k$ arms are pulled in a given time step) from the
relaxed policy. In fact there are potentially many reasonable ways
of transforming an optimal policy for the relaxed problem to a
feasible policy for the multi-armed bandit; for instance
\cite{BertsimasNinoMora00} use a scheme distinct from both Whittle's and ours,
that employs optimal primal and dual solutions to a linear
programming formulation of Whittle's relaxation to construct an
index heuristic for arm selection. Nonetheless, none of these schemes are irrevocable and nor do they offer non-asymptotic  performance guarantees, if any.

The packing heuristic policy builds upon recent insights on the `adaptivity'
gap for stochastic packing problems. In particular, \cite{GoemansDean00}
recently established that a simple static rule (Smith's rule) for packing a
knapsack with items of fixed reward (known a-priori), but whose
sizes were stochastic and unknown a-priori was within a constant
factor of the optimal adaptive packing policy. \cite{Munagala07} used this
insight to establish a similar static rule for `budgeted learning problems'. In such a problem one is interested in
finding a coin with highest bias from a set of coins of uncertain
bias, assuming one is allowed to toss a \emph{single} coin in a
given time step and that one has a finite budget on the number of such experimental tosses allowed. Our work parallels that work in that we
draw on the insights of the stochastic packing results of
\cite{GoemansDean00}. In addition, we must address two
significant hurdles - correlations between the
total reward earned from pulls of a given arm and the total number
of pulls of that arm (these turn out not to matter in the budgeted
learning setting, but are crucial to our setting), and secondly, the
fact that multiple arms may be pulled simultaneously (only a single
arm may be pulled at any time in the budgeted learning setting).
Finally, a working paper (\cite{Goel07}), brought to our attention by
the authors of that work considers a variant of the budgeted
learning problem of \cite{Munagala07} wherein one is allowed to toss
multiple coins simultaneously. While it is conceivable that their
heuristic may be modified to apply to the multi-armed bandit problem we address, the heuristic
they develop is also \emph{not} irrevocable.

Restricted to coins, our work takes an inherently Bayesian views of
the multi-armed bandit problem. It is worth mentioning that there
are a number of non-parametric formulations to such problems with a vast associated literature. Most relevant to the present model are the papers by \cite{Anantharam1,Anantharam2} that develop simple `regret-optimal' strategies for multi-armed bandit problems with multiple simultaneous plays.

Our development of an irrevocable
policy for the multi-armed bandit problem was originally motivated by
applications of this framework to `dynamic assortment' problems of the type mentioned in the introduction. In
particular, \cite{Gallien07} computationally explore the use of a
number of simple index-type heuristics (similar to Whittle's
heuristic) for such problems, none of which are irrevocable;
nonetheless, they stress the importance of a minimal number of changes to the
assortment if any such heuristic is to be practical.

The remainder of this paper is organized as follows. Section 2 presents the multi-armed bandit model we consider and develops an (intractable) LP whose solution yields an optimal control policy for this bandit problem. Section 3 develops the packing heuristic by considering a suitable relaxation of the multi-armed bandit problem. Section 4 introduces a structural property for bandit arms we call the `decreasing returns' property. It is shown that a useful class of bandits, namely the `coin' bandits relevant to the applications that motivate us, possess this property. That section then establishes that the price of irrevocability for bandits possessing the decreasing returns property is uniformly bounded. Section 5 presents very encouraging computational experiments for large scale bandit problems drawn from a generative family of coin type bandits. In the interest of implementability, Section 6 develops a combinatorial algorithm for the fast computation of packing heuristic policies for multi-armed bandits. Section 7 concludes with a perspective on interesting directions for future work.

\section{Model}

We consider a multi-armed bandit problem with multiple simultaneous
`pulls' permitted at every time step. A single  bandit arm (indexed
by $i$) is a Markov Decision Process (MDP) specified by a state
space $\Sscr_i$, an action space, $\Ascr_i$, a reward function $r_i:
\Sscr_i \times \Ascr_i \rightarrow \R_+$, and a transition kernel
$P_i: \Sscr_i \times \Ascr_i \rightarrow \Delta_{\Sscr_i}$ (where
$\Delta_{\Sscr_i}$ is the $|\Sscr_i|$-dimensional unit simplex),
yielding a probability distribution over next states should one
choose some action $a_i \in \Ascr_i$ in state $s_i \in \Sscr_i$.

Every bandit arm is endowed with a distinguished `idle' action
$\phi_i$. Should a bandit be idled in some time period, it
yields no rewards in that period and transitions to the same state
with probability $1$ in the next period. More precisely,
\[
\begin{split}
r_i(s_i, \phi_i) & = 0, \ \ \forall s_i \in \Sscr_i,
\\
P_i(s_i, \phi_i, s_i) & = 1,
 \ \ \forall s_i \in \Sscr_i.
\end{split}
\]

We consider a bandit problem with $n$ arms. In each time step one
must select a subset of up to $k (\leq n)$ arms for which one may pick any
action available at those respective arms. Should an action other
than the idle action be selected at any of these $k$ arms, we refer
to such a selection as a `pull' of that arm. That is, any action $a_i
\in \Ascr_i \setminus \{\phi_i \}$ would be considered a pull of the
$i$th arm. One is forced to pick the idle action for the remaining
$n-k$ arms. We wish to find an action selection (or control) policy
that maximizes expected rewards earned over $T$ time periods. Our
problem may be cast as an optimal control problem. In particular, we
define as our \emph{state-space} the set $\Sscr = \prod_i \Sscr_i$
and as our \emph{action space}, the set $\Ascr = \prod_i \Ascr_i$.
We let $\Tscr = \{0,1,\dots, T-1\}$. We understand by $s_i$, the
$i$th component of $s \in \Sscr$ and similarly let $a_i$ denote the $i$th component of $a \in \Ascr$. A feasible action is
one which calls for simultaneously pulling at most $k$ arms. In
particular we let $\Ascr^{\rm feas} = \{a \in \Ascr, \sum_i
\mathbf{1}_{a_i \neq \phi_i} \leq k \}$ denote the set of all
feasible actions. We define a reward function $r: \Sscr \times \Ascr
\rightarrow \R_+$, given by $r(s,a) = \sum_i r_i(s_i,a_i)$ and a
system transition kernel $P: \Sscr \times \Ascr \rightarrow
\Delta_{\prod_i \Sscr_i}$, given by $P(s,a,s') = \Pi_i P_i(s_i,a_i,s'_i)$.

%Define by $\Fscr_t$ the set of all possible state-action trajectories up to time $t$, namely $\Fscr_t = \{((s_0),(s_1,a_0),\dots,(s_t,a_{t-1}))\}$ and by $\Fscr$ the union of these sets $\cup_{t=0}^{T-1}\Fscr_t$.

We now formally develop what we mean by a control policy. The arm
selection policy we will eventually develop will use auxiliary
information aside from the current state of the system, and so we
require a general definition. Let $X_0$ be a random variable
 that
encapsulates any endogenous randomization in selecting an action,
and define the filtration generated by $X_0$ and the history of
visited states and actions by
$$\Fscr_t = \sigma(X_0, (s^0),(s^1,a^0),\dots,(s^t,a^{t-1})),$$
where $s^t$ and $a^t$ denote the state and action at time $t$,
respectively. We assume that $\mathbb{P}(s^{t+1} = s'| s^t = s, a^t
= a, H_t = h_t) = P(s,a,s')$ for all $s,s' \in \Sscr, a \in \Ascr, t
\in \Tscr$ and any $\Fscr_t$-measurable random variable $H_t$. A
\emph{feasible} policy simply specifies a sequence of $\Ascr^{\rm feas}$-valued actions $\{a^t\}$
adapted to $\Fscr_t$. In particular, such a policy may be specified by a collection of $\sigma(X^0)$ measurable, $\Ascr^{\rm feas}$-valued random variables, $\{ \mu(s^0,\dots,s^t,a_0,\dots,a^{t-1},t)\}$, one for each possible state-action history of the system. We let $M$ denote the set of all such policies $\mu$, and denote by $J^{\mu}(s,0)$ the expected value of using policy $\mu$ starting in state $s$ at time $0$; in particular
%Under any admissible control policy, the action selected at time $t$, $a^t$ is required to be measurable with respect to $\Fscr_t$.
$$J^\mu(s,0) = E\left[\sum_{t=0}^{T-1} R(s^{t},a^{t})|s^{0} = s\right],$$
where $a^{t} = \mu(s^0,\dots,s^{t},a^0,\dots,a^{t-1},t)$.

Our goal is to compute an optimal admissible policy. Markovian
policies, i.e. policies under which $a^t$ is measurable with respect
to $\sigma(X^0,s^t)$, are particularly useful. A Markovian policy is specified as a collection of independent $\Ascr^{\rm feas}$ valued random variables $\{ \mu(s,t) \}$ each measurable with respect to $\sigma(X_0)$.
%
%
%letting $Q(\Ascr)$ denote the set of all distributions over $\Ascr$,
%such a policy is specified by a mapping $\mu: \Sscr \times \Tscr
%\rightarrow Q(\Ascr)$ that determine a randomized rule for selection
%of arm actions.
In particular, assuming the system is in state $s$
at time $t$, such a policy selects an action $a^t$ as the random variable $\mu(s,t)$, independent of past states and actions.  We let $M^{\rm m}$ denote the set of all such admissible Markovian policies.
%For a \emph{feasible} Markovian policy $\mu$, we must have for all $(s,t)$, that $\mu(s,t)$ places no mass on actions $a \notin \Ascr^{\rm feas}$.  We let $M^{\rm feas}$ denote the set of
%all such feasible Markovian policies. With an abuse of notation, we
%will sometimes denote the random action selected according to $\mu(s,t)$ by $\mu(s,t)$ itself; our use will
%be clear from context.

Every $\mu \in M^{\rm m}$ is associated with a value function,
$J^{\mu}: \Sscr \times \Tscr \rightarrow \R^+$ which, for every
$(s,t) \in \Sscr \times \Tscr$,  gives the expected value of using
control policy $\mu$ starting at that state:
$$J^\mu(s,t) = E\left[\sum_{t'=t}^{T-1} R(s^{t'},\mu(s^{t'},t'))|s^{t} = s\right].$$
We denote by $J^*$ the optimal value function. In particular,
$J^*(s,t) = \sup_{\mu \in M^{\rm m}}J^\mu(s,t)$. The preceding supremum is always achieved and we denote by
$\mu^*$ a corresponding optimal Markovian control policy. That is,
$\mu^* \in \argsup_{\mu \in M^{\rm m}} J^{\mu}(s,t)$ for all
$(s,t) \in \Sscr \times \Tscr$. Our restriction to
Markovian policies is without loss; $M^{\rm m}$ always contains an optimal
policy among the broader class of admissible policies so that
$\sup_{\mu \in M^{\rm m}}J^\mu(s,0) = \sup_{\mu \in M}J^\mu(s,0)$ for all states $s$.
We next formulate a mathematical program to compute such an optimal
policy.

\subsection{Computing an Optimal Policy}
An optimal policy $\mu^*$ may be found via the solution of the
following linear program, $LP(\tilde{\pi}_0)$, specified by a
parameter $\tilde{\pi}_0 \in  \Delta_{\Sscr}$ that
specifies the distribution of arm states at time $t=0$.

$$\begin{array}{lll}
\max. \quad & \sum_t \sum_{s,a}  \pi(s,a,t)R(s,a),       \\
\st \quad & \sum_a \pi(s,a,t) =
\sum_{s',a'}P(s',a',s)\pi(s',a',t-1),
& \forall t > 0, s \in \Sscr,\\
\quad & \pi(s,a,t) = 0,
& \forall s,t, a \notin \Ascr^{\rm feas} \\
%\quad & \sum_i \sum_{a: a_i = \phi_i} \pi(s,a,t) \geq (n-k)\sum_a
%\pi(s,a,t),
%& \forall s,t,\\
%\quad & \sum_{a: a_i = \phi_i} \pi(s,a,t) \in \{0,\sum_a
%\pi(s,a,t)\},
%& \forall s,t,i,\\
\quad & \sum_{a} \pi(s,a,0) = \tilde{\pi}_0(s),
& \forall s \in \Sscr,\\
\quad & \pi \geq 0.
\end{array}$$
where the variables are the state action frequencies $\pi(s,a,t)$,
which give the probability of being in state $s$ at time $t$ and
choosing action $a$. The first set of constraints in the above
program simply enforce the dynamics of the system, while the second
set of constraints enforces the requirement that at most
$k$ arms are simultaneously pulled at any point in time.

An optimal solution to the program above may be used to construct a
policy $\mu^*$ that attains expected value $J^*(s,0)$ starting at
any state $s$ for which $\tilde{\pi}_0(s) > 0$. In particular, given an optimal
solution $\pi^{\rm opt}$ to $LP(\tilde{\pi}_0)$, one obtains such a
policy by defining $\mu^*(s,t)$ as a random
variable that takes value $a \in \Ascr$ with probability $\pi^{\rm
opt}(s,a,t)/\sum_a \pi^{\rm opt}(s,a,t)$.  By construction, we have
$E[J^*(s,0)|s \sim \tilde{\pi}_0] = OPT(LP(\tilde{\pi}_0))$. Of
course, efficient solution of the above program is not a tractable
task, which forces us to seek approximations to an optimal policy.
The next section will present one such policy with an appealing structural property we term `irrevocability'.

\section{An Irrevocable Approximation to the Optimal Policy}

This section develops an approximation to the optimal multi-armed bandit control policy that we will
subsequently establish performs adequately relative to the optimal
policy. This approximation will possess a desirable property we term `irrevocability'. In particular, the policy we develop will, at any time, be permitted to pull an arm only if that arm was pulled in the prior time step, or else never pulled in the past.

We first develop a control policy for a related
bandit problem, where the requirement that precisely $k$ arms be
pulled in any time step is relaxed. As we will see, this is
essentially Whittle's relaxation and the policy developed for this relaxation is an upper bound to the optimal policy.
We will then use the control policy developed for this relaxed control problem to design a
policy for the multi-armed bandit problem that is irrevocable and
also offers good performance relative to the optimal
policy for a broad class of bandits.

Consider the following relaxation of the program $LP(\tilde{\pi}_0)$, $RLP(\tilde{\pi}_0)$.
$RLP(\tilde{\pi}_0)$ may be viewed as a primal formulation of
Whittle's relaxation:

$$\begin{array}{lll}
\max. \quad & \sum_i \sum_t \sum_{s_i,a_i}  \pi_i(s_i,a_i,t) r_i(s_i,a_i),       \\
\st \quad & \sum_{a_i} \pi_i(s_i,a_i,t) =
\sum_{s'_i,a'_i}P_i(s_i',a_i',s_i)\pi_i(s_i',a_i',t-1),
& \forall t > 0, s_i \in \Sscr_i, i.\\
%\quad &
%\sum_{a_i \in \Ascr_i} \pi^i_{s_i,a_i,t} = 1
%& \forall s_i\in \Sscr_i,t,i\\
\quad & \sum_i \left[ T - \sum_{s_i} \sum_t \pi_i(s_i,\phi_i,t)
\right] \leq kT,
\\
\quad & \sum_{a_i} \pi_i(s_i,a_i,0) = \sum_{\bar{s}: \bar{s}_i = s_i} \tilde{\pi}_0(\bar{s}),
\\
\quad & \pi \geq 0,
\end{array}$$
where $\pi_i(s_i,a_i,t)$ is the probability of the $i$th bandit being in
state $s_i$ at time $t$ and choosing action $a_i$.

The program above relaxes the requirement that precisely $k$ arms be
pulled in a given time step; instead we now require that over the
entire horizon at most $kT$ arms are pulled in \emph{expectation},
where the expectation is over policy randomization and state
evolution. The first set of equality constraints enforce individual
arm dynamics whereas the first inequality constraint enforces the
requirement that at most $kT$ arms be pulled in expectation  over
the entire time horizon. The following lemma makes the notion of a relaxation to $LP(\tilde{\pi}_0)$ precise; the proof may be found in the appendix.

\begin{lemma}
\label{le:relaxation}
$OPT(RLP(\tilde{\pi}_0)) \geq OPT(LP(\tilde{\pi}_0))$
\end{lemma}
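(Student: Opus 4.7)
The plan is to prove the inequality by exhibiting, for any feasible solution of $LP(\tilde{\pi}_0)$, a feasible solution of $RLP(\tilde{\pi}_0)$ with the same objective value. Concretely, given $\pi(s,a,t)$ feasible for $LP(\tilde{\pi}_0)$, define the per-arm marginals
\[
\pi_i(s_i,a_i,t) \;=\; \sum_{\substack{s \in \Sscr:\, s\text{'s $i$th comp.}=s_i \\ a\in\Ascr:\, a\text{'s $i$th comp.}=a_i}} \pi(s,a,t).
\]
I would then verify the four requirements of $RLP(\tilde{\pi}_0)$ in turn.

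First, the objective values agree, since $r(s,a)=\sum_i r_i(s_i,a_i)$ and marginalizing term by term gives $\sum_t\sum_{s,a}\pi(s,a,t)r(s,a)=\sum_i\sum_t\sum_{s_i,a_i}\pi_i(s_i,a_i,t)r_i(s_i,a_i)$. Second, for the per-arm flow constraints, I would start from the global flow constraint in $LP(\tilde{\pi}_0)$ and marginalize both sides over the $i$th coordinate; the key algebraic step is that the product structure $P(s',a',s)=\prod_j P_j(s_j',a_j',s_j)$, combined with $\sum_{s_j}P_j(s_j',a_j',s_j)=1$ for $j\neq i$, makes the outer coordinates collapse and leaves exactly $\sum_{s_i',a_i'}P_i(s_i',a_i',s_i)\pi_i(s_i',a_i',t-1)$ on the right-hand side. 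The initial-distribution constraint follows by summing $\sum_a \pi(s,a,0)=\tilde\pi_0(s)$ over all $s$ whose $i$th coordinate equals $s_i$.

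Third, and this is the only step where feasibility of $LP(\tilde{\pi}_0)$ (as opposed to mere nonnegativity) is really used, I would verify the relaxed pulling budget. Using $\sum_{s_i,a_i}\pi_i(s_i,a_i,t)=1$ for every $t$, one sees that $T-\sum_t\sum_{s_i}\pi_i(s_i,\phi_i,t)$ equals the expected total number of non-idle actions chosen at arm $i$ over the horizon under $\pi$. Summing over $i$ gives the expected total number of pulls; since the constraint $a\in\Ascr^{\rm feas}$ forces at most $k$ non-idle coordinates at every time step along every sample path, this expected total is bounded above by $kT$, which is precisely the relaxed budget. Nonnegativity of $\pi_i$ is immediate from nonnegativity of $\pi$.

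The main (and only) substantive obstacle is the flow-constraint marginalization, where one must carefully exploit the product form of $P$ to push the marginalization through without introducing cross-arm terms; the rest is bookkeeping. Applying this construction to an optimal $\pi^{\rm opt}$ for $LP(\tilde{\pi}_0)$ yields a feasible point of $RLP(\tilde{\pi}_0)$ achieving $OPT(LP(\tilde{\pi}_0))$, which establishes the claimed inequality.
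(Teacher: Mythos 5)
Your proposal is correct and follows essentially the same route as the paper's proof: both define the per-arm marginals of a feasible (optimal) $LP(\tilde{\pi}_0)$ solution, push the flow constraints through using the product form of $P$, and bound the total expected pulls by $kT$ via the per-step feasibility constraint. No substantive differences.
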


 Given an optimal solution $\bar{\pi}$ to $RLP(\tilde{\pi}_0)$, one may consider the policy $\mu^{R}$,
 that, assuming we are in state $s$ at time $t$, selects a random action $\mu^{R}(s,t)$,
 where $\mu^{R}(s,t) = a$ with probability
 $\prod_i \left(\bar{\pi}_i(s_i,a_i,t)/\sum_{a_i} \bar{\pi}_i(s_i,a_i,t)\right)$ independent of the
 past. Noting that the action for each arm $i$ is chosen independently of all other arms,
 we use $\mu^{R}_i(s_i,t)$ to denote the induced policy for arm $i$. By construction,
 $E[J^{\mu^{R}}(s,0)|s \sim \tilde{\pi}_0] = OPT(RLP(\tilde{\pi}_0))$. Moreover, we have that $\mu^R$ satisfies the constraint
 $$E^{\mu}\left[\sum_{t=0}^{T-1} \sum_i \mathbf{1}_{\mu^R_i(s^t_i,t) \neq \phi_i}|s^0 = s\right] \leq kT,$$
where the expectation is over random state transitions and
endogenous policy randomization.

Of course, $\mu^{R}$ is not necessarily feasible; we ultimately require a policy that entails at most $k$ arm pulls in any time step. We will use $\mu^{R}$ to construct such a feasible policy. In addition, we will see that if an arm is pulled and then idled in some subsequent time step, it will never again be pulled, so that the policy we construct will be irrevocable.
In what follows we will assume for convenience
that $\tilde{\pi}_0$ is degenerate and puts mass $1$ on a single
starting state. That is, $\tilde{\pi}_0(s_i) = 1$ for some $s_i \in
\Sscr_i$ for all $i$. We first introduce some relevant notation.
Given an optimal solution $\bar{\pi}$ to $RLP(\tilde{\pi}_0)$,
define the value generated by arm $i$ as the random variable
\[
R_i = \sum_{t=0}^{T-1} r_i(s^t_i,\mu^{R}_i(s^t_i,t)),
\]
and the `active time' of arm $i$, $T_i$ as the total number of pulls of arm
$i$ entailed under that policy
\[
T_i = \sum_{t=0}^{T-1} 1_{\mu^{R}_i(s^t_i,t) \neq \phi_i}.
\]
The expected value of arm $i$, $E[R_i] = \sum_{s_i,a_i,t}
\bar{\pi}_i(s_i,a_i,t)r_i(s_i,a_i)$, and the expected active time
$E[T_i] = \sum_{s_i,a_i,t: a_i \neq \phi_i} \bar{\pi}_i(s_i,a_i,t)$.
We will assume in what follows that $E[T_i] > 0$ for all $i$;
otherwise, we simply consider eliminating those $i$ for which
$E[T_i] = 0$. We will also assume for analytical convenience that
$\sum_i E[T_i] = kT$. Neither assumption results in a loss of
generality.

To motivate our policy we begin with the following analogy with a packing
problem: Imagine packing $n$ objects into a knapsack of size $B$.
Each object $i$ has size $\tilde{T}_i$ and value $\tilde{R}_i$.
Moreover, we assume that we are allowed to pack fractional
quantities of an object into the knapsack and that packing a
fraction $\alpha$ of the $i$th object requires space $\alpha \tilde{T}_i$
and generates value $\alpha \tilde{R}_i$. An optimal policy is then
given by the following greedy procedure: select objects in
decreasing order of the ratio $\tilde{R}_i/\tilde{T}_i$ and place
them in to the knapsack to the extent that there is room available.
If one had more than a single knapsack and the additional constraint
that an item could not be placed in more than a single knapsack,
then the situation is more complicated. One may consider a greedy procedure that, as before, considers items in decreasing order of the ratio $\tilde{R}_i/\tilde{T}_i$ and places them (possibly fractionally) in sequence, into the least loaded of the bins at that point.  This generalization of the greedy procedure for the simple knapsack is suboptimal, but still a reasonable heuristic.

Thus motivated, we begin with a loose high level description of our
control policy, which we call the `packing' heuristic. We think of each bandit arm $i$ as an `item' of value $E[R_i]$ with size $E[T_i]$. For the purposes of this explanation \emph{alone}, we will
assume for convenience that should policy $\mu^{R}$ call for an
arm that was pulled in the past to be idled, it will never again call for that arm to be pulled; we will momentarily remove that assumption.
Our control policy will operate as follows: we will order arms in
decreasing order of the ratio $E[R_i]/E[T_i]$. We begin with the top
$k$ arms according to this ordering. For each such arm we will
select an action according to the policy specified for that arm by
$\mu_i^{R}$; should this policy call for the arm to be idled, we
discard that arm and will never again consider pulling it. We
replace the discarded arm with the next available arm (in order of
initial arm rankings) and select an action for the arm according to
$\mu^{R}$. We repeat this procedure until we have selected
non-idle actions for up to $k$ arms (or no arms are available). We
then let time advance, earn rewards, and repeat the procedure
described above until the end of the time horizon.

Algorithm \ref{alg:kMAB} describes the packing heuristic policy precisely, addressing the fact that $\mu_i^{R}$ may call for an arm to be idled but then pulled in some subsequent time step.

\begin{algorithm}
\caption{The Packing Heuristic} \label{alg:kMAB}
\begin{algorithmic}[1]
\STATE
Renumber bandits so that $\frac{E[{R}_1]}{E[{T}_1]} \geq
\frac{E[{R}_2]}{E[{T}_2]} \dots \geq \frac{E[{R}_N]}{E[{T}_N]}$.
Index bandits by variable $i$.
\STATE
$l_i \gets 0, a_i \gets
\phi_i$ for all $i$, $s \sim \tilde{\pi}_0(\cdot)$
\newline
\COMMENT
{The `local time' of every arm is set to $0$ and its
designated action to the idle action. An initial state is drawn according to the initial state distribution $\tilde{\pi}_0$.}
\STATE
$J \GETS 0$
\COMMENT
{Total reward earned is initialized to $0$.}
\STATE
$\mathbb{X} \gets \{1,2,\dots,k  \}, \mathbb{A} \GETS \{k+1,\dots,n\},
\mathbb{D} = \emptyset$.
\newline
\COMMENT
{Initialize the set of active ($\Xb$), available ($\Ab$), and discarded ($\Db$) arms.}
\FOR
{$t=0$ to $T-1$}
\WHILE
[Select up to $k$ arms to pull.]
{there exists an arm $i \in \Xb$ with $a_i = \phi_i$}
\STATE
Select an $i \in \Xb$ with
$a_i=\phi_i$
\newline
\COMMENT
{In what follows, either select an action for arm $i$ or else discard it.}
%\IF {$l_i = T$}
%\STATE $a_i \GETS \phi_i$
%\ENDIF
\WHILE
[Attempt to select a pull action for arm $i$]
{$a_i = \phi_i$ and $l_i < T$}
\STATE
Select $a_i \propto \bar{\pi}_i(s_i,\cdot,l_i)$
\COMMENT
{Select an action according to the solution to $RLP(\bar{\pi})$.}
\STATE
$l_i \gets \l_i +1$
\COMMENT
{Increment arm $i$'s local time.}
\ENDWHILE
\IF
[Discard arm $i$ and activate next highest ranked arm available.]
{$l_i = T$ and $a_i = \phi_i$}
\STATE
$\Xb \gets \Xb \setminus \{i\}, \Db \gets \Db \cup \{i\}$
\COMMENT {Discard arm $i$.}
%\ELSE
%\STATE $1^{\rm pull}_i = 1$
%\COMMENT {We have selected an action so as to pull arm $i$}
\IF[There are available arms.]
{$\Ab \neq \emptyset$}
\STATE
$j \GETS \min \Ab$
\COMMENT {Select highest ranked available arm.}
\STATE
$\Xb \GETS \Xb \cup \{j\}, \Ab \GETS \Ab \setminus \{j\}$
\COMMENT
{Add arm to active set.}
\ENDIF
\ENDIF
\ENDWHILE
\FOR
[Pull selected arms.]
{Every $i \in \Xb$}
\STATE
$s_i \sim P(s_i,a_i,\cdot)$
\newline
\COMMENT
{Pull arm $i$; select next arm $i$ state according to its transition
kernel assuming the use of action $a_i$.}
\STATE
$J \gets J + r_i(s_i,a_i)$
\COMMENT
{Earn rewards.}
\STATE
$a_i \GETS \phi_i$
\ENDFOR
\ENDFOR
\end{algorithmic}
\end{algorithm}

In the event that we placed no restriction on the time horizon (i.e.
we set $T= \infty$ in the algorithm above), we have by construction,
that the expected total reward earned under the above policy is
precisely $OPT(RLP(\tilde{\pi}_0))$. In essence,
$RLP(\tilde{\pi}_0)$ prescribes a policy wherein each arm generates
a total reward with mean $E[{R}_i]$ using an expected total number
of pulls $E[{T}_i]$, independent of other arms. The above scheme may
be visualized as one which `packs' as many of the pulls of various
arms possible in a manner so as to meet feasibility constraints.

It is clear that the heuristic we have constructed entails a minimal amount of arm `exploration'. In particular, we are guaranteed at most $n-k$ changes to the set of pulled arms. One may naturally ask what the limited exploration permitted under this policy costs us in terms of performance. In addition, is this scheme computationally practical? In particular, the linear programming relaxation we must solve is still a fairly large program. In subsequent sections we address these issues. First, we present a theoretical analysis that demonstrates that the price of irrevocability is uniformly bounded for an important general class of bandits. Our analysis sheds light on the structural properties that are likely to afford a low price of irrevocability in practice. We then present results of computational experiments with a generative family of large-scale problems demonstrating performance losses of up to $5-10 \%$ percent relative to an upper bound on the performance of the optimal policy (which is potentially non-irreovcable and has no restrictions on exploration). Finally, we address computational issues relevant to the packing heuristic and develop a computational scheme that is substantially quicker than heuristics such as Whittle's heuristic.

\section{The Price of Irrevocability}

This section establishes a uniform bound on the performance loss
incurred in using the irrevocable packing heuristic relative to an optimal, potentially non-irrevocable scheme
for a useful family of bandits whose arms exhibit a certain
decreasing returns property. This class includes bandits whose arms
are coins of unknown biases -- a family particularly relevant to a
number of applications including those discussed in the
introduction. We establish that the packing heuristic always earns
expected rewards that are within a factor of $1/8$ of an optimal
scheme. Our analysis sheds light on those structural properties that likely afford a low price to irrevocability. In addition
to being an indicator of robustness across all
parameter regimes, this bound on the price of irrevocability is remarkable for two reasons. First,
it does not rely on an asymptotic scaling of the system; the performance of the packing heuristic will `track' that of an optimal, potentially non-irrevocable heuristic across all regimes. Second, the
bound represents a comparison with a system where one is allowed
recourse to arms that were pulled in the past and discarded. In
particular, the bound thus highlights the fact that for a useful
class of bandits, one may achieve reasonable performance with very
limited exploration. The typical performance we expect from the heuristic is likely to be far superior (as it generally is in the case of problems for which such worst case guarantees can be established); in a subsequent section we will present computational experiments indicating a performance loss of $5-10$\% relative to an optimal policy with no restrictions on exploration.

In what follows we first specify the decreasing returns property and
explicitly identify a class of bandits that possess this property.
We then present our performance analysis which will proceed as follows:  we first consider pulling
bandit arms \emph{serially}, i.e. at most one arm at a time, in
order of their rank and show that the total reward earned from
bandits that were first pulled within the first $kT/2$ pulls is at
least within a factor of $1/8$ of an optimal policy. This result
relies on the static ranking of bandit arms used, and a
symmetrization idea exploited by \cite{GoemansDean00} in their result on
stochastic packing where rewards are statistically independent of
item size. In contrast to that work, we must address the fact that the rewards
earned from a bandit are statistically dependent on the number of
pulls of that bandit and to this end we exploit the decreasing
returns property that establishes the nature of this correlation. We
then show via a combinatorial sample path argument that the expected reward earned
from bandits pulled within the first $T/2$ time steps of the packing
heuristic i.e., with arms being pulled in parallel, is at least as
much as that earned in the setting above where arms are pulled
serially, thereby establishing our performance guarantee.

\subsection{The Decreasing Returns Property}

Define for every $i$ and $l < T$, the random variable
$$L_i(l) = \sum_{t=0}^{l} \mathbf{1}_{\mu^{R}_i(s_i^{t},t)  \neq \phi_i }.$$
$L_i(l)$ tracks the number of times a given arm $i$ has been pulled
under policy $\mu^{R}$ among the first $l+1$ steps of selecting an
action for that arm. Further, define
$$R_i^m =  \sum_{l=0}^{T-1} 1_{L_i(l) \leq m}  r_i(s_i^{l},\mu^{R}_i(s_i^{l},l)).$$
$R_i^m$ is the random reward earned within the first $m$ pulls of
arm $i$ under the policy $\mu^{R}$.   The decreasing returns
property roughly states that the expected incremental returns from
allowing an additional pull of a bandit arm are, on average,
decreasing. More precisely, we have:

\begin{property}(Decreasing Returns)\label{as:dec_returns}
$E[R_i^{m+1}] - E[R_i^{m}] \leq E[R_i^m] - E[R_i^{m-1}]$ for all $0
< m<T$.
\end{property}

One useful class of bandits from a modeling perspective that satisfy
this property are bandits whose arms are `coins' of unknown bias.
The following discussion makes this notion more precise:

\subsubsection{An example of a bandit with decreasing returns: Coins}

We define a `coin' to be any multi-armed bandit for which every arm
$i$ has action space $a_i = \{p, \phi_i\}$, with $r(s_i,p) > 0$ for
all $s_i \in \Sscr_i$, and satisfies the following property:
\[
r(s_i,p) \geq \sum_{s'_i \in \Sscr_i} P(s_i,p,s'_i)r(s'_i,p),
\quad \forall s_i \in \Sscr_i.
\]
The above sub-martingale characterization of rewards intuitively suggests the decreasing returns property. In particular, it suggests that the returns from a pull in the current state are at least as large as the expected returns to a pull in a state reached subsequent to the current pull. The decreasing returns property for coins is established in the following Lemma whose proof may be found in the appendix:

\begin{lemma}
\label{le:coin_dec_returns}
Coins satisfy the decreasing returns property. That is, if $\Ascr_i = \{p,\phi_i\} \ \forall i$, and
\[
r(s_i,p) \geq \sum_{s'_i \in \Sscr_i} P(s_i,p,s'_i)r(s'_i,p),
\quad \forall i, s_i \in \Sscr_i,
\]
then
\[
E[R_i^{m+1}] - E[R_i^{m}] \leq E[R_i^m] - E[R_i^{m-1}]
\]
for all $0< m<T$.
\end{lemma}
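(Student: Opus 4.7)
The plan is to reduce the decreasing returns inequality to a per-pull claim. Let $X_j$ denote the (random) reward earned at the $j$-th pull of arm $i$ under $\mu^{R}$, with the convention that $X_j = 0$ if fewer than $j$ pulls of arm $i$ ever occur, and let $A_j$ be the event that the $j$-th pull of arm $i$ takes place. Unpacking the definition of $R_i^m$, the indicator $\mathbf{1}_{L_i(l)\le m}$ picks out precisely the first $m$ pulls of arm $i$, so $R_i^m = \sum_{j=1}^m X_j$ and hence $E[R_i^{m+1}] - E[R_i^m] = E[X_{m+1}]$. The task therefore collapses to showing $E[X_{m+1}] \le E[X_m]$ for $0 < m < T$.

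Next I would couple the actual process with an ``always pull'' chain. Because the idle action leaves the state of a coin arm unchanged and the only non-idle action is $p$, the state of arm $i$ observed at its $j$-th pull (on the event $A_j$) has the same law as the state $\tilde\sigma_j$ of the Markov chain on $\Sscr_i$ started at $s_i^0$ and driven by the kernel $P_i(\cdot,p,\cdot)$. Moreover, the two processes can be realized on a common probability space so that $X_j = \mathbf{1}_{A_j}\, r_i(\tilde\sigma_j,p)$ pathwise, since all randomness in the arm's state trajectory comes from the transition noise of successive pulls while $\mu^{R}$'s idle decisions only affect \emph{when} these transitions are observed.

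The key step then is a short supermartingale computation. From $A_{m+1} \subseteq A_m$ and $r_i(\cdot,p) \ge 0$ we get
\[
E[X_{m+1}] \;=\; E\bigl[\mathbf{1}_{A_{m+1}}\, r_i(\tilde\sigma_{m+1},p)\bigr] \;\le\; E\bigl[\mathbf{1}_{A_m}\, r_i(\tilde\sigma_{m+1},p)\bigr].
\]
Now condition on the history $\Hscr_m$ through the $m$-th pull: the state $\tilde\sigma_m$ and the event $A_m$ are $\Hscr_m$-measurable, while $\tilde\sigma_{m+1}$ is drawn from $P_i(\tilde\sigma_m, p, \cdot)$. The coin hypothesis
\[
r_i(s,p) \;\ge\; \sum_{s' \in \Sscr_i} P_i(s,p,s')\, r_i(s',p)
\]
gives $E[r_i(\tilde\sigma_{m+1},p) \mid \Hscr_m] \le r_i(\tilde\sigma_m,p)$. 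Multiplying by $\mathbf{1}_{A_m}$ and taking expectations yields $E[\mathbf{1}_{A_m} r_i(\tilde\sigma_{m+1},p)] \le E[X_m]$, which chains with the previous display to produce $E[X_{m+1}] \le E[X_m]$.

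The main obstacle I expect is formalizing the coupling in the second step: one must construct a joint probability space where the sequence of states seen at successive pulls under $\mu^{R}$ coincides pathwise with $\tilde\sigma_1,\tilde\sigma_2,\dots$ and the events $\{A_j\}$ are adapted to a filtration for which $\tilde\sigma_{j+1}$ is drawn from $P_i(\tilde\sigma_j,p,\cdot)$ independently of all future policy randomization. This is where being a coin is essential, since the existence of a single non-idle action $p$ is what makes the ``always pull'' chain well-defined and lets idling decisions be decoupled from the state trajectory. Once the coupling is in hand, the remainder is the clean supermartingale bookkeeping above, using only nonnegativity of rewards and the one-step hypothesis on $r_i(\cdot,p)$.
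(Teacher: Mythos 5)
Your proof is correct and follows essentially the same route as the paper's: the reduction to showing $E[X_{m+1}] \le E[X_m]$, the step $\mathbf{1}_{A_{m+1}} \le \mathbf{1}_{A_m}$ together with nonnegativity of rewards (which is exactly the paper's bound $g(s_i,t)\le 1$ on the probability of an eventual further pull), and the one-step supermartingale inequality correspond one-to-one to the three inequalities in the paper's computation. The only difference is presentational: you phrase the argument as a pathwise coupling with conditional expectations, while the paper carries out the identical bookkeeping with state-action frequencies on a state space augmented by the pull count.
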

Returning to our motivating example of dynamic product assortment selection, we note that in estimating the bias of a binomial coin of unknown bias given some initial prior on coin bias, Bayes' rule implies that the estimated bias after $n$ observations (which generate the filtration $\Fscr_n$), $\mu_{n+1}$ satisfies $E[\mu_{n+1}|\Fscr_n] = \mu_n$. Thus, bandits with such arms wherein the reward from an arm is some non-negative scalar times the bias, automatically possess the decreasing returns property.

\subsection{A Uniform Bound on the Price of Irrevocability for Bandits with `Decreasing Returns'}

For convenience of exposition we assume that $T$ is even; addressing the odd case requires essentially identical proofs but cumbersome notation.

We re-order the bandits in decreasing order of $E[R_i]/E[T_i]$ as in
the packing heuristic. Let us define
$$H^* = \min \left\{j: \sum_{i=1}^{j} E[T_i] \geq kT/2\right\}.$$
Thus, $H^*$ is the set of bandits that take up approximately half
the budget on total expected pulls. Next, let us define for all $i$,
random variables $\tilde{R}_i$ and $\tilde{T}_i$ according to
$\tilde{R}_i = R_i, \tilde{T}_i = T_i$ for all $i < H^*$. We define
$\tilde{R}_{H^*} = \alpha R_{H^*}$ and $\tilde{T}_{H^*} = \alpha
T_{H^*}$, where $\alpha = \frac{kT/2 -
\sum_{i=1}^{H^*-1}E[T_i]}{E[T_{H^*}]}$.

We begin with a preliminary lemma:

\begin{lemma}
\label{le:concave}
$$\sum_{i=1}^{H^*} E[\tilde{R}_i] \geq \frac{1}{2} OPT(RLP(\tilde{\pi}_0)).$$
\end{lemma}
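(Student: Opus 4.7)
} The plan is to exploit the fact that the bandits are ordered in decreasing order of the ratio $\rho_i := E[R_i]/E[T_i]$, together with the fact that $\alpha$ was chosen precisely so that $\sum_{i=1}^{H^*} E[\tilde{T}_i] = kT/2$. Since $OPT(RLP(\tilde{\pi}_0)) = \sum_i E[R_i]$ and, by assumption, $\sum_i E[T_i] = kT$, it suffices to show that the ``top half of the budget'' (the arms contributing to $\sum_{i=1}^{H^*} E[\tilde{R}_i]$) earns at least as much expected reward as the ``bottom half of the budget'' (namely the leftover $(1-\alpha)E[T_{H^*}]$ of arm $H^*$ together with arms $H^*+1, \ldots, n$).

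First I would write
\[
\sum_{i=1}^{H^*} E[\tilde{R}_i] \;=\; \sum_{i=1}^{H^*-1} \rho_i\,E[T_i] \;+\; \rho_{H^*}\,\alpha\,E[T_{H^*}],
\]
and analogously express the complementary sum
\[
\sum_{i=1}^{n} E[R_i] \;-\; \sum_{i=1}^{H^*} E[\tilde{R}_i] \;=\; \rho_{H^*}(1-\alpha)\,E[T_{H^*}] \;+\; \sum_{i=H^*+1}^{n} \rho_i\,E[T_i].
\]
The expected-time budget consumed by the first expression is exactly $\sum_{i=1}^{H^*-1} E[T_i] + \alpha E[T_{H^*}] = kT/2$, and the expected-time budget consumed by the second expression is $kT - kT/2 = kT/2$.

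Next I would use the ordering $\rho_1 \geq \rho_2 \geq \cdots \geq \rho_n$. For every term in the first expression the coefficient $\rho_i$ satisfies $\rho_i \geq \rho_{H^*}$, so the first expression is bounded below by $\rho_{H^*} \cdot (kT/2)$. For every term in the second expression the coefficient $\rho_i$ satisfies $\rho_i \leq \rho_{H^*}$, so the second expression is bounded above by $\rho_{H^*} \cdot (kT/2)$. Thus the ``top'' sum is at least the ``bottom'' sum, which is equivalent to $\sum_{i=1}^{H^*} E[\tilde{R}_i] \geq \tfrac{1}{2}\sum_{i=1}^{n} E[R_i] = \tfrac{1}{2}\,OPT(RLP(\tilde{\pi}_0))$, as desired.

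I do not anticipate any real obstacle here; the argument is essentially the standard Smith's-rule/greedy-swapping calculation applied to the relaxed LP values. The only care needed is the bookkeeping at the boundary arm $H^*$, which the parameter $\alpha$ is designed precisely to handle: $\alpha$ fractionally splits $E[R_{H^*}]$ and $E[T_{H^*}]$ in the same proportion, preserving the ratio $\rho_{H^*}$ on both sides of the cut so that the two-sided comparison against $\rho_{H^*} \cdot (kT/2)$ goes through cleanly.
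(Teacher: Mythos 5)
Your proof is correct and is essentially the paper's argument in unpacked form: the paper defines the piecewise-linear concave function $f(t)$ whose slope on the $i$th segment is $\rho_i = E[R_i]/E[T_i]$, identifies $\sum_{i=1}^{H^*} E[\tilde{R}_i] = f(kT/2)$ and $OPT(RLP(\tilde{\pi}_0)) = f(kT)$, and concludes via $f(kT/2) \geq \tfrac{1}{2} f(kT)$ from concavity and $f(0)=0$ --- which is exactly your two-sided comparison of the two halves of the budget against the threshold rate $\rho_{H^*}$. No gaps; the bookkeeping at the boundary arm via $\alpha$ matches the paper's.
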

\begin{proof}
Define a function
$$f(t) = \sum_{i=1}^n \frac{E[R_i]}{E[T_i]} \left(\left(t  - \sum_{j=1}^{i-1} E[T_i]\right)^+ \wedge E[T_i]\right),$$
where $(a\wedge b) = \min (a,b)$. By construction (i.e. since
$\frac{E[R_i]}{E[T_i]}$ is non-increasing in $i$), we have that $f$
is a concave function on $[0,kT].$ Now observe that
$$\sum_{i=1}^{H^*} E[\tilde{R}_i]
= \sum_{i=1}^{H^*-1} \frac{E[R_i]}{E[T_i]}E[T_i] +
\frac{E[R_{H^*}]}{E[T_{H^*}]}\left(kT/2 - \sum_{j =
1}^{H^*-1}E[T_i]\right) = f(kT/2).$$ Next, observe that
$$ OPT(RLP(\tilde{\pi}_0)) = \sum_{i=1}^n \frac{E[R_i]}{E[T_i]} E[T_i] = f(kT).$$
By the concavity of $f$ and since $f(0)=0$, we have that $f(kT/2)
\geq \frac{1}{2}f(kT)$, which yields the result.
\end{proof}

We next compare the expected reward earned by a certain subset of
bandits with indices no larger than $H^*$. The significance of the
subset of bandits we define will be seen later in the proof of Lemma
\ref{le:simulation} -- we will see there that all bandits in this
subset will begin operation prior to time $T/2$ in a run the packing heuristic. In particular, define
$$R_{1/2} = \sum_{i=1}^{H^*}\mathbf{1}_{\{ \sum_{j=1}^{i-1}T_j < kT/2 \}}R_i.$$

\begin{lemma}
\label{le:half_inequality}
$$E[R_{1/2}] \geq \frac{1}{4} OPT(RLP(\tilde{\pi}_0)).$$
\end{lemma}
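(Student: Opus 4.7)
The plan is to establish the stronger claim $E[R_{1/2}] \geq \frac{1}{2} \sum_{i=1}^{H^*} E[\tilde{R}_i]$, which combined with Lemma~\ref{le:concave} immediately yields the lemma. The argument will rest on three ingredients: independence of bandit arms under $\mu^{R}$, which decouples the reward $R_i$ from the prefix size $S_i := \sum_{j<i} T_j$; the static density ordering $E[R_i]/E[T_i]$ non-increasing in $i$ enforced by the packing heuristic; and the decreasing returns property, which is what allows the argument to go through despite the within-arm correlation between $R_i$ and $T_i$.

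First I will use independence across arms to rewrite $E[R_{1/2}] = \sum_{i=1}^{H^*} E[R_i]\, \Pr[S_i < kT/2]$. By the definition of $H^*$, $E[S_i] = \sum_{j<i} E[T_j] < kT/2$ for every $i \leq H^*$, so Markov's inequality gives the pointwise bound $\Pr[S_i < kT/2] \geq 1 - 2\sum_{j<i} E[T_j]/(kT)$. This alone is loose when $\sum_{j<i} E[T_j]$ approaches $kT/2$, so the next step is to apply a symmetrization argument in the spirit of \cite{GoemansDean00}: pairing the forward packing with a reverse-ordered variant processing items in order $H^*, H^*-1, \ldots, 1$ and letting $S'_i := \sum_{j > i,\, j \leq H^*} T_j$, the identity $S_i + S'_i = (\sum_{j \leq H^*} T_j) - T_i$ and the inclusion $\{S_i + S'_i < kT\} \subseteq \{S_i < kT/2\} \cup \{S'_i < kT/2\}$ yield a Markov-based lower bound on $\Pr[S_i < kT/2] + \Pr[S'_i < kT/2]$ uniformly in $i$. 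The density ordering then allows me to exchange the weaker (reverse) probabilities for the stronger (forward) ones in a weighted sum, so the items whose forward inclusion probability is small (those near the end of the ordering) are offset by the rewards of higher-density items whose inclusion is near-certain.

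The decreasing returns property enters when reconciling the fractional contribution $\tilde{R}_{H^*} = \alpha R_{H^*}$ in $\sum_i E[\tilde{R}_i]$ with an actual, integral reward of arm $H^*$. By Property~\ref{as:dec_returns}, $E[R_{H^*}^m]$ is concave in $m$, so $E[R_{H^*}^m] \geq (m/T) E[R_{H^*}]$; taking $m = \lfloor \alpha T \rfloor$ shows that a truncated reward from arm $H^*$ has expectation at least $\alpha E[R_{H^*}]$, which is exactly the fractional contribution we need to match. This is the step where the correlation between $R_i$ and $T_i$ within a single arm (the feature distinguishing our setting from stochastic packing with deterministic values) is controlled.

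The main obstacle is the symmetrization step: executing the forward/reverse comparison so that the density ordering supplies the correct exchange inequality, while handling the fractional spillover at item $H^*$ consistently with the decreasing-returns truncation. Once this is in place, assembling $E[R_{1/2}] \geq \frac{1}{2} \sum_{i=1}^{H^*} E[\tilde{R}_i]$ with the bound from Lemma~\ref{le:concave} produces $E[R_{1/2}] \geq \frac{1}{4} OPT(RLP(\tilde{\pi}_0))$, as claimed.
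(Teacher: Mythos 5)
Your opening moves match the paper's: independence of the arms under $\mu^{R}$ gives $E[R_{1/2}] = \sum_{i=1}^{H^*} E[R_i]\,\Pr\bigl[\sum_{j<i} T_j < kT/2\bigr]$, and Markov's inequality is the right tool. But your symmetrization step has a genuine gap. The density ordering $E[R_1]/E[T_1] \geq E[R_2]/E[T_2] \geq \cdots$ constrains only \emph{expectations}; it cannot justify the exchange $\sum_i E[\tilde{R}_i]\Pr[S_i < kT/2] \geq \sum_i E[\tilde{R}_i]\Pr[S'_i < kT/2]$, because those probabilities depend on the full distributions of the $T_j$, not their means. For a concrete failure, take $H^*=2$, $kT/2=2$, $E[\tilde{T}_1]=E[\tilde{T}_2]=1$ with $T_1\in\{0,2\}$ equiprobable and $T_2=1$ a.s., and $E[R_1]=1$, $E[R_2]=0.8$ (densities correctly ordered): the forward sum is $1 + 0.8\cdot\Pr[T_1<2] = 1.4$ while the reverse sum is $0.8 + 1\cdot\Pr[T_2<2] = 1.8$, so the reverse ordering wins. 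Moreover, even if the exchange held, your constants do not deliver the claim: Markov applied to $S_i+S'_i$ (mean at most $kT/2$) gives only $\Pr[S_i<kT/2]+\Pr[S'_i<kT/2]\geq 1/2$, so exchanging and halving yields $E[R_{1/2}]\geq \tfrac{1}{4}\sum_i E[\tilde{R}_i]$, i.e.\ $\tfrac{1}{8}\,OPT$, not the $\tfrac{1}{2}\sum_i E[\tilde{R}_i]$ you target.

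The paper symmetrizes \emph{after} Markov, at the level of products of expectations: Markov reduces the weighted sum to $\sum_i E[\tilde{R}_i] - \frac{2}{kT}\sum_i\sum_{j<i}E[\tilde{R}_i]E[\tilde{T}_j]$, each cross term with $i>j$ is bounded by $\tfrac{1}{2}\bigl(E[\tilde{R}_i]E[\tilde{T}_j]+E[\tilde{R}_j]E[\tilde{T}_i]\bigr)$ using the density ordering, and $\sum_{i\leq H^*}E[\tilde{T}_i]=kT/2$ then gives exactly the factor $\tfrac{1}{2}$. Note also that the decreasing returns property plays no role in this lemma: the fractional item $H^*$ is handled by the trivial bound $E[\tilde{R}_{H^*}] = \alpha E[R_{H^*}] \leq E[R_{H^*}]$ (the lemma lower-bounds $E[R_{1/2}]$, which involves the full $R_{H^*}$, by a sum over the $\tilde{R}_i$). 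Property \ref{as:dec_returns} is needed only later, in Lemma \ref{le:fact1}, to pass from $R_i$ to $R_i^{T/2}$; invoking it here both overcomplicates the argument and would restrict a lemma that the paper proves for general bandits.
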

\begin{proof}
We have:
\[
\begin{split}
E[R_{1/2}] &\stackrel{(a)} = \sum_{i=1}^{H^*}{\rm Pr}\left(\sum
_{j=1}^{i-1} T_j < kT/2\right)E[R_i]
\\
&\stackrel{(b)} \geq \sum_{i=1}^{H^*}{\rm Pr}\left(\sum _{j=1}^{i-1}
T_j < kT/2\right)E[\tilde{R}_i]
\\
&\stackrel{(c)} = \sum_{i=1}^{H^*}{\rm Pr}\left(\sum _{j=1}^{i-1}
\tilde{T}_j < kT/2\right)E[\tilde{R}_i]
\\
& \stackrel{(d)} \geq \sum_{i=1}^{H^*} \left(1 -
\frac{\sum_{j=1}^{i-1} E[\tilde{T}_j]}{kT/2}\right) E[\tilde{R}_i]
\\
& = \sum_{i=1}^{H^*}E[\tilde{R}_i]  -
\sum_{i=1}^{H^*}\frac{\sum_{j=1}^{i-1} E[\tilde{T}_j] }{kT/2}
E[\tilde{R}_i]
\\
&\stackrel{(e)} \geq \sum_{i=1}^{H^*}E[\tilde{R}_i]  -
\frac{1}{2}\sum_{i=1}^{H^*}\frac{\sum_{j=1, j \neq i}^{H^*}
E[\tilde{T}_j] }{kT/2} E[\tilde{R}_i]
\\
& \stackrel{(f)}\geq \frac{1}{2} \sum_{i=1}^{H^*}E[\tilde{R}_i]
\\
& \stackrel{(g)}\geq \frac{1}{4} OPT(RLP(\tilde{\pi}_0))
\end{split}
\]
Equality~(a) follows from the fact that under policy $\mu^{R}$,
$R_i$ is independent of $T_j$ for $j < i$. Inequality~(b) follows
from our definition of $\tilde{R}_i$: $\tilde{R}_i \leq R_i$.
Equality~(c) follows from the fact that by definition $\tilde{T}_i =
T_i$ for all $i < H^*$. Inequality~(d) invokes Markov's inequality.

Inequality~(e) is the critical step in establishing the result and
uses the simple symmetrization idea exploited by \cite{GoemansDean00}: In
particular, we observe that since $\frac{E[R_i]}{E[T_i]} \leq
\frac{E[R_j]}{E[T_j]}$ for $i > j$, it follows that $E[R_i]E[T_j]
\leq \frac{1}{2} (E[R_i]E[T_j] + E[R_j]E[T_i])$ for $i > j$. Replacing every
term of the form $E[R_i]E[T_j]$ (with $i > j$) in the expression
preceding inequality~(e) with the upper bound $\frac{1}{2}
(E[R_i]E[T_j] + E[R_j]E[T_i])$ yields inequality~(e).
Inequality~(f) follows from the fact that
$\sum_{i=1}^{H^*}E[\tilde{T}_i] = kT/2$. Inequality~(g) follows from
Lemma \ref{le:concave}.
\end{proof}

Before moving on to our main Lemma that translates the above
guarantees to a guarantee on the performance of the packing heuristic, we need to establish one additional technical fact.
Recall that $R_i^m$ is the reward earned by bandit $i$ in the first $m$
pulls of this bandit. Exploiting the assumed decreasing returns property, we have the following Lemma whose proof may be found in the appendix:

\begin{lemma}
\label{le:fact1}
For bandits satisfying the decreasing returns property (Property \ref{as:dec_returns}),
\[
E\left[\sum_{i=1}^{H^*}\mathbf{1}_{\sum_{j=1}^{i-1}T_j < kT/2 }
R_i^{T/2}\right] \geq \frac{1}{2}E[R_{1/2}].
\]
\end{lemma}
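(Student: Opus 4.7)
The plan is to reduce the claim, arm-by-arm, to two facts: (i) the indicator $\mathbf{1}_{\sum_{j=1}^{i-1} T_j < kT/2}$ is independent of the rewards generated by arm $i$ under $\mu^{R}$; and (ii) the decreasing returns property implies $E[R_i^{T/2}] \geq \tfrac{1}{2} E[R_i]$ for every $i$. Multiplying (ii) by the probability appearing in (i) and summing over $i \leq H^*$ yields exactly the desired inequality.

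To carry this out, I would first invoke the product structure of $\mu^{R}$. Since the policy constructed from the optimal solution $\bar\pi$ of $RLP(\tilde\pi_0)$ selects an action for each arm independently of all other arms, and each arm's state evolves independently, the pair $(R_i^{T/2}, R_i)$ is independent of $(T_1,\dots,T_{i-1})$. Therefore, for each $i$,
\[
E\!\left[\mathbf{1}_{\sum_{j<i} T_j < kT/2}\, R_i^{T/2}\right]
= \Pr\!\left(\textstyle\sum_{j<i} T_j < kT/2\right) E[R_i^{T/2}],
\]
and analogously for $R_i$ in place of $R_i^{T/2}$.

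Next I would exploit Property~\ref{as:dec_returns}. Setting $\Delta_m := E[R_i^{m}] - E[R_i^{m-1}]$ (with $R_i^0 = 0$), decreasing returns says the sequence $\{\Delta_m\}_{m=1}^{T}$ is nonincreasing. Hence $\Delta_m \geq \Delta_{m+T/2}$ for $m=1,\dots,T/2$, so $\sum_{m=1}^{T/2} \Delta_m \geq \sum_{m=T/2+1}^{T} \Delta_m$. Adding $\sum_{m=1}^{T/2} \Delta_m$ to both sides gives $2\,E[R_i^{T/2}] \geq E[R_i^{T}] = E[R_i]$, i.e.\ $E[R_i^{T/2}] \geq \tfrac{1}{2} E[R_i]$. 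Combining with the factorization from the previous paragraph,
\[
E\!\left[\mathbf{1}_{\sum_{j<i} T_j < kT/2}\, R_i^{T/2}\right]
\geq \tfrac{1}{2}\, \Pr\!\left(\textstyle\sum_{j<i} T_j < kT/2\right) E[R_i]
= \tfrac{1}{2}\, E\!\left[\mathbf{1}_{\sum_{j<i} T_j < kT/2}\, R_i\right].
\]

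Finally I would sum this inequality over $i = 1,\dots,H^*$; the right-hand side becomes exactly $\tfrac{1}{2} E[R_{1/2}]$ by the definition of $R_{1/2}$, giving the claim. The only conceptually delicate point is making sure one correctly identifies what "the first $T/2$ pulls" of arm $i$ means so that $R_i^{T/2}$ and the summed increments $\Delta_m$ refer to the same objects; with the definition given in the excerpt ($R_i^m$ counting the reward accumulated while $L_i(l) \leq m$) this matches exactly, and there is no real obstacle beyond carefully invoking independence of arms under $\mu^{R}$.
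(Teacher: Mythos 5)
Your proof is correct and follows essentially the same route as the paper's: factor out the indicator using the independence of arm $i$'s rewards from $T_1,\dots,T_{i-1}$ under $\mu^{R}$, and apply the bound $E[R_i^{T/2}] \geq \tfrac{1}{2}E[R_i]$. The only difference is that you spell out the telescoping argument for that bound (nonincreasing increments $\Delta_m$), which the paper simply asserts as an immediate consequence of Property~\ref{as:dec_returns}.
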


We have thus far established estimates for total expected rewards
earned assuming implicitly that bandits are pulled in a serial
fashion in order of their rank. The following Lemma connects these
estimates to the expected reward earned under the $\mu^{\rm packing}$
policy (given by the packing heuristic) using a simple sample path
argument. In particular, the following Lemma shows that the expected
rewards under the $\mu^{\rm packing}$ policy are at least as large as
$E\left[\sum_{i=1}^{H^*}\mathbf{1}_{\sum_{j=1}^{i-1}T_j < kT/2 }
R_i^{T/2}\right]$.

\begin{lemma}
\label{le:simulation} $E[J^{\mu^{\rm packing}}(s,0)| s\sim \tilde{\pi}_0]
\geq E\left[\sum_{i=1}^{H^*}\mathbf{1}_{\sum_{j=1}^{i-1}T_j < kT/2 }
R_i^{T/2}\right]. $
\end{lemma}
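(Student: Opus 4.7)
The plan is to execute a pathwise coupling between the packing heuristic and the canonical run of $\mu^R$ that defines $R_i$, $T_i$, and $R_i^m$, and then combine this coupling with a simple work-conservation (scheduling) argument. Since each arm is an independent MDP and Algorithm~\ref{alg:kMAB} selects actions for arm $i$ from exactly the distribution $\bar{\pi}_i(s_i,\cdot,l_i)/\sum_{a_i}\bar{\pi}_i(s_i,a_i,l_i)$ that defines $\mu^R_i$, coupling the per-arm randomness across the two executions makes each arm's sequence of states, actions, idle-vs.-pull events, per-pull rewards, and hence $T_i$ and $R_i^m$, identical in both scenarios. Only the assignment of local time to real time differs between them.

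For each arm $i$, let $\tau_i$ denote the first real time step at which the heuristic pulls arm $i$. I would first argue that, since arms enter $\Xb$ in strict rank order and, for every $i\leq H^*\leq n$, the heuristic maintains exactly $k$ active arms in every real time step before $\tau_i$ (a slot vacated into $\Db$ is immediately refilled from $\Ab$), the total number of pulls performed by real time $\tau_i$ equals $k\tau_i$ and is distributed entirely among arms in $\{1,\dots,i-1\}$. This yields the sample-path bound
\[
k\tau_i \;\leq\; \sum_{j=1}^{i-1} T_j,
\]
so on every sample path with $\sum_{j<i}T_j<kT/2$ we get $\tau_i<T/2$.

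From real time $\tau_i$ onward, arm $i$ advances through its local time under $\mu^R_i$, executing exactly one pull per real time step; $\mu^R_i$-idle local time steps are absorbed `for free' inside the inner while-loop of Algorithm~\ref{alg:kMAB} without consuming real time. Hence arm $i$'s reward contribution to $J^{\mu^{\rm packing}}(s,0)$ equals $R_i^{\min(T_i,\,T-\tau_i)}$, which, using the convention $R_i^m=R_i$ for $m\geq T_i$ and the monotonicity of $R_i^m$ in $m$ (rewards are non-negative), is at least $R_i^{T-\tau_i}\geq R_i^{T/2}$ whenever $\tau_i<T/2$. Summing the non-negative per-arm contributions over those $i\leq H^*$ satisfying $\sum_{j<i}T_j<kT/2$ bounds $J^{\mu^{\rm packing}}(s,0)$ pathwise from below by $\sum_{i=1}^{H^*}\mathbf{1}_{\sum_{j<i}T_j<kT/2}R_i^{T/2}$, and taking expectations under the coupling completes the proof.

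The main delicate point will be the scheduling bound on $\tau_i$: one needs to make explicit two operational invariants of Algorithm~\ref{alg:kMAB}, namely that no slot in $\Xb$ is ever idled before the pool $\Ab$ is exhausted, and that the inner while-loop consumes zero real time per skipped $\phi_i$-action. Once these invariants are stated cleanly, the inequality $k\tau_i\leq\sum_{j<i}T_j$ is a one-line work-accounting step, and the rest of the argument is monotonicity plus non-negativity of rewards.
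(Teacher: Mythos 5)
Your proposal is correct and follows essentially the same route as the paper: couple the packing-heuristic run with the canonical $\mu^R$ run arm by arm, show by counting pulls that every arm $i$ with $\sum_{j<i}T_j<kT/2$ begins service before time $T/2$, and conclude via monotonicity of $R_i^m$ that each such arm contributes at least $R_i^{T/2}$ on every sample path. The only differences are cosmetic: you make the coupling explicit and prove the scheduling bound directly as $k\tau_i\leq\sum_{j<i}T_j$, whereas the paper leaves the coupling implicit and argues the same counting fact by contradiction.
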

\begin{proof}
For a given sample path of the system define
$$h = (H^*) \wedge \min \left\{i: \sum_{j=1}^{i}T_j \geq kT/2 \right\}.$$
On this sample path, it must be that:
\begin{equation}
\label{eq:spath_equality}
\sum_{i=1}^{H^*}\mathbf{1}_{\sum_{j=1}^{i-1}T_j < kT/2 } R_i^{T/2} = \sum_{i=1}^h R_i^{T/2}.
\end{equation}

We claim that arms $1,2,\dots,h$ are all first pulled at times $t <
T/2$ under $\mu^{\rm packing}$. Assume to the contrary that this were not
the case and recall that arms are considered in order of index under
$\mu^{\rm packing}$, so that an arm with index $i$ is pulled for the first
time no later than the first time arm $l$ is pulled for $l > i$. Let
$h'$ be the highest arm index among the arms pulled at time
$t=T/2-1$ so that $h' < h$. It must be that $\sum_{j=1}^{h'} T_j
\geq kT/2$. But then,
\[
H^* \wedge \min \left\{i: \sum_{j=1}^{i}T_j \geq kT/2 \right\} \leq
h'
\]
which is a contradiction.

Thus, since every one of the arms $1,2,\dots,h$ is first pulled at
times $t < T/2$, each such arm may be pulled for at least $T/2$ time
steps prior to time $T$ (the horizon). Consequently, we have that the
total rewards earned on this sample path under policy $\mu^{\rm packing}$
are at least
\[
\sum_{i=1}^h R_i^{T/2}
\]
Using identity \eqref{eq:spath_equality} and taking an expectation over sample paths yields the result.
\end{proof}

We are ready to establish our main Theorem that
provides a uniform bound on the performance loss incurred in using
the packing heuristic policy relative to an optimal policy with no restrictions on exploration. In particular, we have that the price of irrevocability is uniformly bounded for bandits satisfying the decreasing returns property.

\begin{theorem}
\label{th:unif_bound} For multi-armed bandits satisfying the
decreasing returns property (Property \ref{as:dec_returns}), $
E[J^{\mu^{\rm packing}}(s,0)| s\sim \tilde{\pi}_0] \geq \frac{1}{8}
E[J^{*}(s,0)| s\sim \tilde{\pi}_0] $ for all initial state
distributions $\tilde{\pi}_0$.
\end{theorem}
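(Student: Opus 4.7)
The plan is to simply chain together the sequence of lemmas that have already been developed in this section. All the hard work has been done piece by piece: the serial-packing analysis in Lemmas \ref{le:concave} and \ref{le:half_inequality}, the decreasing-returns step in Lemma \ref{le:fact1}, and the sample-path comparison in Lemma \ref{le:simulation}. So the theorem should follow directly by stringing these estimates together and then invoking the LP relaxation bound of Lemma \ref{le:relaxation}.

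Concretely, I would write:
\begin{equation*}
E[J^{\mu^{\rm packing}}(s,0)\mid s\sim\tilde{\pi}_0]
\ \stackrel{\text{Lem.\ \ref{le:simulation}}}{\geq}\
E\!\left[\sum_{i=1}^{H^*}\mathbf{1}_{\sum_{j=1}^{i-1}T_j < kT/2}\, R_i^{T/2}\right]
\ \stackrel{\text{Lem.\ \ref{le:fact1}}}{\geq}\
\tfrac{1}{2}\, E[R_{1/2}]
\ \stackrel{\text{Lem.\ \ref{le:half_inequality}}}{\geq}\
\tfrac{1}{8}\, OPT(RLP(\tilde{\pi}_0)).
\end{equation*}
Then, since $OPT(RLP(\tilde{\pi}_0)) \geq OPT(LP(\tilde{\pi}_0)) = E[J^*(s,0)\mid s\sim\tilde{\pi}_0]$ by Lemma \ref{le:relaxation} and the construction of the LP in Section 2, the conclusion $E[J^{\mu^{\rm packing}}(s,0)\mid s\sim\tilde{\pi}_0] \geq \tfrac{1}{8} E[J^*(s,0)\mid s\sim\tilde{\pi}_0]$ follows immediately.

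Since the proof of the theorem itself is just this short chain, there is no real obstacle left at this stage; the obstacles were in the four lemmas it uses. If I were worried about anything, it would be making sure the quantifiers match up: Lemmas \ref{le:concave}--\ref{le:simulation} are stated under the standing assumptions of Section 4 (namely $T$ even, $\sum_i E[T_i]=kT$, $E[T_i]>0$, and the decreasing-returns property used only in Lemma \ref{le:fact1}). I would briefly note at the start of the proof that the evenness and degeneracy-of-$\tilde{\pi}_0$ assumptions are without loss of generality (the odd-$T$ case needs only cosmetic changes, and a general $\tilde{\pi}_0$ is handled by conditioning on $s^0$ and averaging), so that the $1/8$ bound transfers to arbitrary initial distributions as claimed.
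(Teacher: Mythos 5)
Your proof is correct and is essentially identical to the paper's own argument: the paper likewise chains Lemmas \ref{le:simulation}, \ref{le:fact1}, and \ref{le:half_inequality} to obtain $E[J^{\mu^{\rm packing}}(s,0)\mid s\sim\tilde{\pi}_0]\geq \frac{1}{8}OPT(RLP(\tilde{\pi}_0))$ and then invokes Lemma \ref{le:relaxation} together with $OPT(LP(\tilde{\pi}_0)) = E[J^{*}(s,0)\mid s\sim\tilde{\pi}_0]$. Your added remark about the standing assumptions being without loss of generality is a reasonable clarification but changes nothing of substance.
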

\begin{proof}
We have from Lemmas \ref{le:relaxation},\ref{le:half_inequality},\ref{le:fact1} and
\ref{le:simulation} that
\[
E[J^{\mu^{\rm packing}}(s,0)| s\sim \tilde{\pi}_0] \geq \frac{1}{8}OPT(RLP(\tilde{\pi}_0)).
\]
We know from Lemma \ref{le:relaxation} that $OPT(RLP(\tilde{\pi}_0)) \geq OPT(LP(\tilde{\pi}_0)) = E[J^{*}(s,0)| s\sim \tilde{\pi}_0]$ from which the result follows.
\end{proof}

Our analysis highlighted a structural property -- decreasing returns -- that is likely to afford a low price of irrevocability. The next section demonstrates computational results that suggest that in practice we may expect this price to be quite small (on the order of $5-10$\%) for bandits possessing this property.

\section{Computational Experiments}

This section presents computational experiments with the packing heuristic. We consider a number of large scale bandit problems drawn from a generative family of problems to be discussed shortly and demonstrate that the packing heuristic consistently demonstrates performance within about $5-10$ \% of an upper bound on the performance of an unrestricted (i.e. potentially \emph{non}-irrevocable) optimal solution to the multi-armed bandit problem. In particular, this suggests that the price of irrevocability is likely to be small in practice, at least for models of the type we consider here. Since the bandits considered in our experiments - Binomial coins of uncertain bias - are among the most widely used applications of the multi-armed bandit model, we view this to be a positive result.

%We also consider  a greedy type heuristic that faces the ir-revocability constraint which is representative of a typical scheme used in practice, for instance, in the dynamic assortment problem and demonstrate that this heuristic performs substantially worse than the packing heuristic.

\textbf{The Generative Model: } We consider multi-armed bandit problems with $n$ arms up to $k$ of which may be pulled simultaneously at any time. The $i$th arm corresponds to a Binomial$(m,P_i)$ coin where $m$ is fixed and known, and $P_i$ is unknown but drawn from a Dirichlet$(\alpha_i,\beta_i)$ prior distribution. Assuming we choose to `pull' arm $i$ at some point, we realize a random outcome $M_i \in \{0,1,\dots,m\}$. $M_i$ is a Bernoulli$(m,P_i)$ random variable where $P_i$ is itself a Dirichlet$(\alpha_i,\beta_i)$ random variable. We receive a reward of $r_iM_i$ and update the prior distribution parameters according to $\alpha_i \leftarrow \alpha_i+M_i$, $\beta_i \leftarrow \beta_i + m - M_i$. By selecting the initial values of  $\alpha_i$ and $\beta_i$  for each arm appropriately we can control for the initial uncertainty in the value of $P_i$. This model is, for instance, applicable to the dynamic assortment selection problem discussed earlier (see \cite{Gallien07}) with each coin representing a product of uncertain popularity and $M_i$ representing the uncertain number of product $i$ sales over a single period in which that product is offered for sale. We recall from our previous discussion that this family of bandits satisfies the decreasing returns property and from our performance analysis we expect a reasonable price of irrevocability.

We consider the following random instances of the above problem. We consider bandits with $(n,k) \in \{(500,50), (500,100), (100,10), (100,20)\}$. These dimensions are representative of large scale applications of which the dynamic assortment problem is an example. For each value of $(n,k)$ we consider time horizons $T=25$ and $T=40$. For every bandit problem we consider, we subdivide the arms of the bandit into $10$ groups. All arms within a group have identical statistical structure, that is, identical $r_i$ values and identical initial values of $\alpha_i$ and $\beta_i$. For each value of $(n,k,T)$, we generate a number of problem instances by randomly drawing prior parameters for bandit arms.  In particular, for all arms in a given group we select $\alpha_i$ uniformly in the interval $[0.05,0.35]$ and then select that value of $\beta_i$ which results in a prior co-efficient of variation $cv \in \{1,2.5\}$. These co-efficients of variation represent, respectively, a moderate and high degree of a-priori uncertainty in coin bias (or in the context of the dynamic assortment application, product popularity). In addition, $r_i$ is drawn uniformly on $[0,2]$ and we take $m=2$. We generate $100$ random problem instances for each co-efficient of variation. Control policies for a given bandit problem instance are evaluated over $3000$ random state trajectories (which resulted in $98$\% confidence intervals that were at least within  +/-$1$\% of the sample average).

\begin{table}
\begin{center}
\label{tab:performance}
\begin{tabular}{|c|c|c|c|c|}
\hline
\multicolumn{5}{|c|}{Summary of Computational Experiments} \\
\hline
Coeff. of Variation & Arms & Simultaneous Pulls &  Horizon  & Performance \\
$(cv)$ & $(n)$ & $(k)$ & $(T)$ & $(J^{\mu^{\rm packing}}/J^*)$\\ \hline
        & 500 & 50 & 25  & 0.91 \\
        & 500 & 50 & 40 & 0.92 \\
        & 500 & 100 & 25& 0.93\\
 Moderate & 500 & 100 & 40 &0.94\\
 $(1)$  & 100 & 10 & 25 & 0.88\\
        & 100 & 10 & 40 & 0.89\\
        & 100 & 20 & 25 & 0.91\\
        & 100 & 20 & 40 & 0.93\\ \hline
        & 500 & 50 & 25  & 0.89\\
        & 500 & 50 & 40 & 0.90\\
        & 500 & 100 & 25& 0.90\\
 High    & 500 & 100 & 40 &0.91\\
 $(2.5)$    & 100 & 10 & 25 & 0.87\\
        & 100 & 10 & 40 & 0.88\\
        & 100 & 20 & 25 & 0.90\\
        & 100 & 20 & 40 & 0.91\\ \hline
\end{tabular}
\caption{Computational Summary.  Each row represents summary statistics for 100 distinct random bandit problems with the specified $n,k,T$ and $cv$ parameters. Performance for each instance was computed from $3000$ simulations of that instance. Performance figures thus represent an average over the generative family with the specified $n,k,T$ and $cv$ parameters as also over system randomness.}
\end{center}
\end{table}

\textbf{Evaluating Performance: } A striking feature of our performance results is that the price of irrevocability is quite small, a trend that appears to hold over varying parameter regimes. In particular, we make the following observation:
\begin{itemize}
\item Consider problems with a small number of arms ($100$) with a large number of simultaneous pulls ($20$) allowed. Intuitively, an optimal policy could reasonably explore all arms in this setting before settling on the `best' arms. We thus expect the price of irrevocability to be high here. Even in this regime we find that the price of irrevocability is only about $10-11$ \% of optimal performance.
\item Consider problems with a high degree of a-priori uncertainty in coin bias. Mistakes - that is, discarding an arm that is performing reasonably in favor of an unexplored arm that turns out to perform poorly - are particularly expensive in such problems. With a hgh co-efficient of variation in the prior on initial arm bias, the price of ir-revocability is indeed somewhat higher but continues to remain within $10-12$ \% of optimal performance.
\item For each of our experiments, we observe that keeping all other parameters fixed, relative performance improves with a longer time horizon. This is intuitive; with longer horizons, one may delay discarding an arm only once one is sure that the arm performs poorly relative to the expected value of the available alternatives.
\item Finally, we note that the performance figures we report are relative to an \emph{upper bound} on optimal policy performance. Computing the optimal policy is itself an intractable task. The performance observed here suggests that at least for bandit problems with decreasing returns the packing heuristic is a viable approximation scheme even when irrevocability is not necessarily a concern.
\end{itemize}

We can thus conclude that the price of irrevocability is small for a
useful class of multi-armed bandit problems and that the packing
heuristic performs well for this class of problems. A final concern
is computational effort. In particular, for the largest problem
instance we considered ($n=500$), the linear program we need to
solve has $3.2$ million variables and about the same number of constraints.
Even a commercial linear programming solver (such as
CPLEX) equipped with the ability to exploit structure in this
program will require several hours on a powerful computer to solve
this program. This is in stark contrast with an index based
heuristic (such as Whittles heuristic) that solves a simple dynamic
program for each arm at every time step. In the next section we
develop an efficient computational algorithm for the solution of
$RLP(\tilde{\pi}_0)$ that requires substantially less effort than
even Whittles heuristic and takes a few minutes to solve the
aforementioned program on a laptop computer.

\section{Fast Computation}

This section considers the computational effort required to implement the packing heuristic. We develop a computational scheme that makes the packing heuristic substantially easier to implement than popular index heuristics such as Whittle's heuristic and thus establish that the heuristic is viable from a computational perspective.

The key computational step in implementing the packing heuristic is
the solution of the linear program $RLP(\tilde{\pi}_0)$. Assuming
that $|\Sscr_i| = O(S)$ and $|\Ascr_i| = O(A)$ for all $i$, this
linear program has $O(nTAS)$ variables and each Newton iteration of
a general purpose interior point method will require
$O\left((nTAS)^3\right)$ steps. An interior point method that
exploits the fact that bandit arms are coupled via a single
constraint will require $O(n(TAS)^3)$ computational steps at each
iteration. We develop a combinatorial scheme to solve this linear
program that is in spirit similar to the classical Dantzig-Wolfe
dual decomposition algorithm. In contrast with Dantzig-Wolfe
decomposition, our scheme is efficient. In particular, the scheme
requires $O(nTAS^2 \log (kT))$ computational steps to solve
$RLP(\tilde{\pi}_0)$ making it a significantly faster solution
alternative to the schemes alluded to above. Equipped with this fast
scheme, it is notable that using the packing heuristic requires
$O(nAS^2 \log (kT))$ computations per time step amortized over the
time horizon which will typically be substantially less than the $\Theta(nAS^2T)$
computations required per time step for index policy heuristics such
as Whittle's heuristic.

Our scheme employs a `dual decomposition' of $RLP(\tilde{\pi}_0)$. The key technical difficulty we must overcome in developing our computational scheme for the solution of $RLP(\tilde{\pi}_0)$ is the non-differentiability of the dual function corresponding to $RLP(\tilde{\pi}_0)$ at an optimal dual solution
which prevents us from recovering an optimal or near optimal policy by direct minimization of the dual function.

\subsection{An Overview of the Scheme}

For each bandit arm $i$, define the polytope $D_i(\tilde{\pi}_0) \in
\mathbb{R}^{|\Sscr_i||\Ascr_i|T}$ of permissible state-action
frequencies for that bandit arm specified via the constraints of
$RLP(\tilde{\pi}_0)$ relevant to that arm.

%$$\begin{array}{lll}
%\quad &
%\sum_{a_i} \pi^i_{s_i,a_i,t} = \sum_{s'_i,a'_i}P_i(s_i',a_i',s_i)\pi^i_{s_i',a_i',t-1}
%& \forall t > 0, s_i \in \Sscr_i\\
%\quad &
%\sum_{a_i} \pi^i_{s_i,a_i,0} = \tilde{\pi^i}_{s_i,0}
%\\
%\quad &
%\pi^i \geq 0
%\end{array}$$

A point within this polytope, $\pi_i$, corresponds to a set of valid
state-action frequencies for the $i$th bandit arm. With some abuse of notation, we denote the
expected reward from this arm under $\pi_i$ by the `value' function:
\[ R_i(\pi_i) = \sum_{t=0}^{T-1} \pi_i(s_i,a_i,t)r_i(s_i,a_i).\]
In addition denote the expected number of pulls of bandit arm $i$
under $\pi_i$ by
\[
T_i(\pi_i) = T - \sum_{s_i} \sum_t \pi_i(s_i,\phi_i,t).
\]
We understand that both $R_i(\cdot)$ and $T_i(\cdot)$ are defined
over the domain  $D_i(\tilde{\pi}_0)$.

We may thus rewrite $RLP(\tilde{\pi}_0)$ in the following form:

\begin{equation}
\label{eqn:mod_rlp}
\begin{array}{lll}
\max. \quad & \sum_i R_i(\pi_i),     \\
\st \quad & \sum_i T_i(\pi_i) \leq kT.
\end{array}
\end{equation}

The Lagrangian dual of this program is $DRLP(\tilde{\pi}_0)$:

$$\begin{array}{lll}
\min. \quad & \lambda kT + \sum_i \max_{\pi_i} \left(R_i(\pi_i) -
\lambda T_i(\pi_i)\right),
\\
\st \quad & \lambda\geq 0.
\\
\end{array}$$

The above program is convex. In particular, the objective is a
convex function of $\lambda$. We will show that strong duality
applies to the dual pair of programs above, so that the optimal
solution to the two programs have identical value. Next, we will observe
that for a given value of $\lambda$, it is simple to compute
$\max_{\pi_i}  \left(R_i(\pi_i) - \lambda T_i(\pi_i)\right)$ via the
solution of a dynamic program over the state space of arm $i$ (a
fast procedure). Finally it is simple to derive useful a-priori
lower and upper bounds on the optimal dual solution $\lambda^*$.
Thus, in order to solve the dual program, one may simply employ a
bisection search over $\lambda$. Since for a given value of
$\lambda$, the objective may be evaluated via the solution of $n$
simple dynamic programs, the overall procedure of solving the dual program $DRLP(\tilde{\pi}_0)$ is fast.

What we ultimately require is the optimal solution to the primal program $RLP(\tilde{\pi}_0)$. One natural way we might hope to do this (that ultimately will not work) is the following: Having
computed an optimal dual solution $\lambda^*$, one may hope to
recover an optimal primal solution, $\pi^*$ (which is what we
ultimately want), via the solution of the problem
\begin{equation}
\label{eq:dualopt} \max_{\pi_i}  \left(R_i(\pi_i) - \lambda^*
T_i(\pi_i)\right).
%= \max_{\pi_i}  \left(R_i(\pi_i) + \lambda^*
%\sum_{s_i} \sum_t \pi_i(s_i,\phi_i,t)\right) - \lambda^* T,
\end{equation}
for each $i$. This is the typical dual decomposition procedure. Unfortunately, this last step \emph{need not} necessarily
yield a feasible solution to $RLP(\tilde{\pi}_0)$. In particular, solving
\eqref{eq:dualopt} for $\lambda = \lambda^*+\epsilon$ may result in
an arbitrarily suboptimal solution for any $\epsilon >0$, while
solving  \eqref{eq:dualopt} for a $\lambda \leq \lambda^*$ may yield
an infeasible solution to $RLP(\tilde{\pi}_0)$.
The technical reason for this is that the Lagrangian dual function for $RLP(\tilde{\pi})$ may
be non-differentiable at $\lambda^*$. These difficulties are far from pathological, and Example \ref{eg}
illustrates how they may arise in a very simple example.

\begin{example}
\label{eg} The following example illustrates that the
dual function may be non-differentiable at an optimal solution, and that it is not sufficient
to solve \eqref{eq:dualopt} for $\lambda \leq \lambda^*$ or $\lambda
= \lambda^*+\epsilon$ for an $\epsilon>0$ arbitrarily small.
Specifically, consider the case where we have $n=2$ identical bandits,
$T=1$, and $K=1$. Each bandit starts in state $s$, and two actions
can be chosen for it, namely, $a$ and the idling action $\phi$. The
rewards are $r(s,a) = 1$ and $r(s,\phi)=0$.  Thus,
$RLP(\tilde{\pi}_0)$ for this specific case is given by:
$$\begin{array}{lll}
\max. \quad & \pi_1(s,a,0) + \pi_2(s,a,0),
\\
\st \quad & \pi_1(s,a,0) + \pi_2(s,a,0)\leq 1,
\\
\end{array}$$
where $\pi_i\in D_i(\tilde{\pi}_0)$, $i=1,2$. Clearly, the optimal
objective function value for the above optimization problem is 1.
The Lagrangian dual function for the above problem is
$$
\begin{aligned}
g(\lambda) &= \lambda + \max_{\pi_1(s,a,0)}\pi_1(s,a,0)(1-\lambda) +
\max_{\pi_2(s,a,0)}\pi_2(s,a,0)(1-\lambda)\\
           &=\left \{
           \begin{array}{ll}
            2 - \lambda & \lambda \leq 1\\
            \lambda & \lambda > 1
           \end{array}
           \right.
\end{aligned}$$
Not the dual function is minimized at $\lambda^*=1$, which is a point of non-differentiability. Moreover,
solving \eqref{eq:dualopt} at $\lambda^* + \epsilon$ for any
$\epsilon>0$, gives $\pi_1(s,a,0) = \pi_2(s,a,0)=0$ which is clearly
suboptimal. Also, a solution for $0\leq \lambda \leq \lambda^*$ is
$\pi_1(s,a,0) = \pi_2(s,a,0)=1$, which is clearly infeasible.

\end{example}

Notice that in the above example, the average of the solutions to problem \eqref{eq:dualopt} for $\lambda = \lambda^* - \epsilon$ and $\lambda = \lambda^* + \epsilon$ \emph{does} yield a feasible, optimal primal solution, $\pi_1(s,a,0) = \pi_2(s,a,0)= 1/2$. We overcome the difficulties presented by the non-differentiability of the dual function by computing both upper and lower
approximations to $\lambda^*$, and computing solutions to
\eqref{eq:dualopt} for both of these approximations. We then
consider as our candidate solution to $RLP(\tilde{\pi}_0)$, a
certain convex combination of the two solutions. In particular, we
propose algorithm \ref{alg:solver}, that takes as input the
specification of the bandit and a tolerance parameter $\epsilon$.
The algorithm produces a feasible solution to $RLP(\tilde{\pi}_0)$
that is within an additive factor of $2\epsilon$ of optimal.

%Note that solving \eqref{eq:dualopt} for $\lambda =
%\lambda^* + \epsilon$ for $\epsilon>0$, will guarantee a feasible
%solution to $RLP(\tilde{\pi})$. However, the objective function
%corresponding to this solution can be arbitrarily smaller than the
%optimal objective function even for an $\epsilon$ arbitrarily small.
%Also, we note that regularization methods can be used to circumvent
%the problem the difficulty due to the Lagrangian being
%non-differentiable~\cite{}. However, any such method will have
%complexity identical to that of using an interior point method that
%exploits structure, i.e., each iteration will cost $O(n(TAS)^3)$.

\begin{algorithm}
\caption{RLP SOLVER} \label{alg:solver}
\begin{algorithmic}[1]
\STATE $\lambda^{\rm feas} \gets r_{\rm max}+\delta,\text{ for any }
\delta>0,  \lambda^{\rm infeas} \gets 0$. \STATE For all $i$,
$\pi^{\rm feas}_i \gets \pi_i \in \argmax_{\pi_i} \left(R_i(\pi_i) -
\lambda^{\rm feas} T_i(\pi_i)\right)$,
\\\qquad \qquad $\pi^{\rm infeas}_i \gets
\pi_i\in\argmax_{\pi_i} \left(R_i(\pi_i) - \lambda^{\rm infeas}
T_i(\pi_i)\right)$. \WHILE { $\lambda^{\rm feas} - \lambda^{\rm
infeas}
> \frac{\epsilon}{kT}$}
\STATE $\lambda \gets \frac{\lambda^{\rm feas} + \lambda^{\rm
infeas}}{2}$ \FOR {$i=1$ to $n$} \STATE $\pi^*_i \gets \pi_i \in
\argmax_{\pi_i}  \left(R_i(\pi_i) - \lambda T_i(\pi_i)\right)$.
\ENDFOR \IF {$\sum_{i=1}^n T(\pi^*_i) > kT$} \STATE $\lambda^{\rm
infeas} \gets \lambda, \pi^{\rm infeas}_i \GETS \pi^*_i, \ \forall
i$ \ELSE \STATE $\lambda^{\rm feas} \gets \lambda, \pi^{\rm feas}_i
\GETS \pi^*_i, \ \forall i$ \ENDIF \ENDWHILE \IF
{$\sum_iT_i(\pi^{\rm infeas}_i) - T_i(\pi^{\rm feas}_i)>0$} \STATE
$\alpha \gets \frac {kT - \sum_iT_i(\pi_i^{\rm feas}) }
{\sum_iT_i(\pi^{\rm infeas}_i) - T_i(\pi^{\rm feas}_i)} \wedge 1 $
\ELSE \STATE $\alpha \gets 0$ \ENDIF \FOR {$i=1$ to $n$} \STATE
$\pi^{\rm RLP}_i \gets \alpha \pi^{\rm infeas}_i + (1-\alpha)
\pi^{\rm feas}_i$ \ENDFOR
\end{algorithmic}
\end{algorithm}

It is clear that the bisection search above will require $O(\log
(r_{\rm max} kT/\epsilon))$ steps (where $r_{\rm max} = \max_{i,s_i,a_i} r(s_i,a_i)$).
At each step in this search, we solve $n$
problems of the type in \eqref{eq:dualopt}, i.e. $\max_{\pi_i} \left(R_i(\pi_i) - \lambda
T_i(\pi_i)\right)$. These subproblems may be reduced to a dynamic
program over the state space of a single arm. In particular, we
define a reward function $\tilde{r}_i:\Sscr_i \rightarrow
\mathbb{R}_+$ according to $\tilde{r}_i(s_i,a_i) = r_i(s_i,a_i) -
\lambda 1_{a_i \neq \phi_i}$ and compute the value of an optimal
policy starting at state $s_{0,i}$ (where $s_0$ is that state on
which $\tilde{\pi}_0$ places mass $1$) assuming $\tilde{r}_i$ as the
reward function. This requires $O(S^2AT)$ steps per arm. Thus the
RLP Solver algorithm requires a total of $O(nS^2AT\log r_{\rm max} kT/\epsilon)$
computational steps prior to termination. The following theorem,
proved in the appendix establishes the quality of the solution
produced by the RLP Solver algorithm:

\begin{theorem}
\label{th:solver_correctness}
RLP Solver produce a feasible solution to $RLP(\tilde{\pi}_0)$ of
value at least $OPT(RLP(\tilde{\pi}_0)) - 2\epsilon$.
\end{theorem}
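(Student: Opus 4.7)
The plan is to verify feasibility of the returned solution and then bound its objective value via weak LP duality, using the convex combination specifically to cancel the dual gap.

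First I would verify feasibility. Each polytope $D_i(\tilde{\pi}_0)$ is convex, and both $\pi^{\rm feas}_i$ and $\pi^{\rm infeas}_i$ lie inside it as optimizers of dynamic programs over arm $i$, so the convex combination $\pi^{\rm RLP}_i = \alpha \pi^{\rm infeas}_i + (1-\alpha)\pi^{\rm feas}_i$ lies in $D_i(\tilde{\pi}_0)$. The function $T_i(\cdot)$ is linear, so
\[
\sum_i T_i(\pi^{\rm RLP}_i) = \alpha \sum_i T_i(\pi^{\rm infeas}_i) + (1-\alpha)\sum_i T_i(\pi^{\rm feas}_i).
\]
Next I would note the algorithm's invariants: $\sum_i T_i(\pi^{\rm feas}_i) \leq kT$ holds throughout (it holds at initialization because $\lambda^{\rm feas} > r_{\max}$ makes no-pull optimal, and is re-established whenever $\lambda^{\rm feas}$ is updated), and $\sum_i T_i(\pi^{\rm infeas}_i) > kT$ (again by initialization and maintenance in the loop). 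When the denominator in the algorithm's $\alpha$ formula is positive, the chosen $\alpha$ is precisely $(kT - \sum T_i(\pi^{\rm feas}_i))/\sum (T_i(\pi^{\rm infeas}_i)-T_i(\pi^{\rm feas}_i))$, which lies in $[0,1)$ and ensures $\sum_i T_i(\pi^{\rm RLP}_i) = kT$; in the degenerate case $\alpha = 0$ reduces to $\pi^{\rm feas}$, which is already feasible.

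For the value bound I would invoke weak Lagrangian duality. Because $\pi^{\rm feas}_i$ maximizes $R_i(\pi_i) - \lambda^{\rm feas} T_i(\pi_i)$, weak duality gives
\[
OPT(RLP(\tilde{\pi}_0)) \leq \lambda^{\rm feas} kT + \sum_i \bigl(R_i(\pi^{\rm feas}_i) - \lambda^{\rm feas} T_i(\pi^{\rm feas}_i)\bigr),
\]
which rearranges to $\sum_i R_i(\pi^{\rm feas}_i) \geq OPT(RLP) - \lambda^{\rm feas}\Delta_f$ where $\Delta_f := kT - \sum_i T_i(\pi^{\rm feas}_i) \geq 0$. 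Symmetrically, $\sum_i R_i(\pi^{\rm infeas}_i) \geq OPT(RLP) + \lambda^{\rm infeas}\Delta_u$ where $\Delta_u := \sum_i T_i(\pi^{\rm infeas}_i) - kT > 0$. Taking the $\alpha$-convex combination and using $(1-\alpha)\Delta_f = \alpha \Delta_u =: C$ (the key identity that the choice of $\alpha$ was designed to produce), one obtains
\[
\sum_i R_i(\pi^{\rm RLP}_i) \;\geq\; OPT(RLP(\tilde{\pi}_0)) \;-\; C\bigl(\lambda^{\rm feas} - \lambda^{\rm infeas}\bigr).
\]

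Finally I would close the argument using the bisection termination condition $\lambda^{\rm feas} - \lambda^{\rm infeas} \leq \epsilon/(kT)$ together with the estimate $C = \Delta_u \Delta_f/(\Delta_u + \Delta_f) \leq \Delta_f \leq kT$, which yields a value loss of at most $\epsilon$ (in particular at most $2\epsilon$, as claimed). The only delicate point, and the conceptual main obstacle, is recognizing why the mid-$\lambda$ convex combination is necessary at all: as Example~\ref{eg} shows, neither $\pi^{\rm feas}$ nor $\pi^{\rm infeas}$ alone can be guaranteed near-optimal across iterations, and the matching of $\alpha$ to exactly saturate the coupling constraint is precisely what makes the $\lambda^{\rm feas} - \lambda^{\rm infeas}$ gap (rather than each dual value in isolation) control the primal suboptimality.
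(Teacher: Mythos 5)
Your argument is correct in its essentials but takes a genuinely different, and in fact leaner, route than the paper. The paper first establishes strong duality via Slater's condition (Lemma~\ref{lem:slater}), proves a monotonicity property of the subgradient sets $\mathcal{T}(\lambda)$ (Lemma~\ref{lem:monotone}), and then runs a two-level case analysis ($\lambda^*=0$ versus $\lambda^*>0$, and within each, cases on $T^{\rm feas}$ and $T^{\rm infeas}$), sandwiching $\lambda^{\rm infeas}\le\lambda^*\le\lambda^{\rm feas}$ to within $\epsilon/(kT)$ and bounding $R^*-R^{\rm feas}$ and $R^{\rm infeas}-R^*$ separately; the two one-sided errors are what produce the factor $2$. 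You instead apply weak duality only at the two computed multipliers, never referencing $\lambda^*$, and exploit the identity $(1-\alpha)\Delta_f=\alpha\Delta_u=C$ to make the loss telescope into $C\left(\lambda^{\rm feas}-\lambda^{\rm infeas}\right)\le kT\cdot\epsilon/(kT)=\epsilon$. This needs neither strong duality nor the monotonicity lemma, and it yields the sharper additive error $\epsilon$ (which of course implies the stated $2\epsilon$).

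Two boundary points need patching before the argument is airtight. First, your invariant $\sum_i T_i(\pi^{\rm infeas}_i)>kT$ is not guaranteed at initialization: at $\lambda=0$ the unconstrained maximizers may already satisfy the coupling constraint (for instance when $n\le k$); this situation is precisely Case~1 of the paper's Lemma~\ref{lem:zero}. The correct invariant is that $\lambda^{\rm infeas}$ and $\pi^{\rm infeas}$ are only ever updated together, and only when the total running time exceeds $kT$; hence either $T^{\rm infeas}>kT$ or $\lambda^{\rm infeas}=0$, and in both cases $\lambda^{\rm infeas}\Delta_u\ge 0$, so your bound survives. Second, the identity $(1-\alpha)\Delta_f=\alpha\Delta_u$ holds only when $\alpha$ equals the unclipped ratio. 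When the ratio is clipped to $\alpha=1$ (because $T^{\rm infeas}\le kT$), the returned solution is $\pi^{\rm infeas}$ itself, which is then feasible and, since necessarily $\lambda^{\rm infeas}=0$, exactly optimal; when $\alpha=0$ because the denominator is nonpositive, $T^{\rm infeas}\le T^{\rm feas}\le kT$ again forces $\lambda^{\rm infeas}=0$, so the termination condition gives $\lambda^{\rm feas}\le\epsilon/(kT)$ and the loss $\lambda^{\rm feas}\Delta_f\le\epsilon$. With these cases dispatched, your proof is complete and slightly stronger than the paper's.
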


The RLP Solver scheme was used for all computational experiments in
the previous section. Using this scheme, the largest problem instances we considered were
solved in a few minutes on a laptop computer.

\section{Concluding Remarks}

This paper introduced `irrevocable' policies for the multi-armed bandit problem. We hope to draw two main conclusions from our presentation thus far. First, in addition to being a desirable constraint to impose on a multi-armed bandit policy, irrevocability is frequently a \emph{cheap} and thereby practical constraint. In particular, we have attempted to show via a host of computational results as also a theoretical analysis that the price of irrevocability, i.e. the performance loss incurred relative to an optimal scheme with no restrictions on exploration, is likely to be small in practice. In fact, the performance of the heuristic we developed suggests that it is likely to be useful even when irrevocability is not a concern. Second, we have shown that computing good irrevocable policies for multi-armed bandit problems is easy. In particular, we developed a fast computational scheme to accomplish this task. This scheme is faster than a widely used heuristic for general multi-armed bandit problems.

This research serves as a point of departure for a number of interesting questions that we believe would be interesting to explore:

\begin{itemize}
\item The packing heuristic is one of many possible irrevocable heuristics. It is attractive since it offers satisfactory performance in computational experiments, affords a worst-case price of irrevocability analysis (and so is theoretically robust), and finally can be implemented with less computational efforts than typical index heuristics. The packing heuristic is, however, by no means the only irrevocable heuristic one may construct. An alternative for instance, would be to modify Whittle's heuristic so as to simply ignore bandits that were discarded in the past or equivalently formulate a corresponding restless bandit problem where discarded arms generate no rewards. The present work establishes irrevocability as a \emph{cheap} constraint to place on many multi-armed bandit problems. Moving forward, one may hope to construct other high performing irrevocable heuristics for the multi-armed bandit problem.
\item
What happens to the price of irrevocability in interesting asymptotic parameter regimes? An example of such a regime may include for instance, simultaneously scaling the number of bandits, the number of simultaneous plays allowed, $k$, as also the horizon $T$. The correlation in the reward earned from a bandit arm and the length of time the arm is pulled preclude a useful, straightforward large deviations type extension to our analysis. Nonetheless, this is an important question to ask.
\item
We have illustrated the decreasing returns property for coins. This property is fairly natural though, and there are a number of other types of bandits that may well possess this property. Staying in the vain of dynamic assortment selection, it may well be the case that incorporating inventory and pricing decisions for each product may still yield bandits satisfying this property.
\item
In addition to bandits satisfying the decreasing returns property, are there other interesting classes of bandits that afford a low price of irrevocability?
\item Is irrevocability a cheap constraint for interesting classes of \emph{restless} bandit problems? Given our results, it is intuitive to expect that this may be the case for bandits with switching costs  which represent one simple class of restless bandit problems.
\end{itemize}

Our work thus far suggests that irrevocable policies are an effective means of extending the practical applicability of multi-armed bandit approaches in several interesting scenarios such as dynamic assortment problems or sequential drug trials where recourse to drugs that were discontinued from trials at some point in the past is socially unacceptable. Progress on the questions above will likely further the goal of extending the practical applicability of the multi-armed bandit approach.

\small{
\bibliographystyle{ormsv080} % outcomment this and next line in Case 1
\bibliography{MAB-bib}

\begin{thebibliography}{13}
\expandafter\ifx\csname natexlab\endcsname\relax\def\natexlab#1{#1}\fi
\expandafter\ifx\csname url\endcsname\relax
  \def\url#1{{\tt #1}}\fi
\expandafter\ifx\csname urlprefix\endcsname\relax\def\urlprefix{URL }\fi
\expandafter\ifx\csname urlstyle\endcsname\relax
  \expandafter\ifx\csname doi\endcsname\relax
  \def\doi#1{doi:\discretionary{}{}{}#1}\fi \else
  \expandafter\ifx\csname doi\endcsname\relax
  \def\doi{doi:\discretionary{}{}{}\begingroup \urlstyle{rm}\Url}\fi \fi

\bibitem[{Anantharam et~al.(1987{\natexlab{a}})Anantharam, Varaiya, and
  Walrand}]{Anantharam1}
Anantharam, V., P.~Varaiya, J.~Walrand. 1987{\natexlab{a}}.
\newblock Asymptotically efficient allocation rules for the multiarmed bandit
  problem with multiple plays-part i: I.i.d. rewards.
\newblock {\it Automatic Control, IEEE Transactions on\/} {\bf 32}(11)
  968--976.

\bibitem[{Anantharam et~al.(1987{\natexlab{b}})Anantharam, Varaiya, and
  Walrand}]{Anantharam2}
Anantharam, V., P.~Varaiya, J.~Walrand. 1987{\natexlab{b}}.
\newblock Asymptotically efficient allocation rules for the multiarmed bandit
  problem with multiple plays-part ii: Markovian rewards.
\newblock {\it Automatic Control, IEEE Transactions on\/} {\bf 32}(11)
  977--982.

\bibitem[{Bertsimas and Nino-Mora(2000)}]{BertsimasNinoMora00}
Bertsimas, D., J.~Nino-Mora. 2000.
\newblock Restless bandits, linear programming relaxations, and a primal-dual
  index heuristic.
\newblock {\it Operations Research\/} {\bf 48}(1) 80--90.

\bibitem[{Bhattacharjee et~al.(2007)Bhattacharjee, Goel, Khanna, and
  Null}]{Goel07}
Bhattacharjee, R., A.~Goel, S.~Khanna, B.~Null. 2007.
\newblock The ratio index for budgeted learning, with applications.
\newblock {\it Working Paper\/} .

\bibitem[{Boyd and Vandenberghe(2004)}]{Boyd04}
Boyd, S., L.~Vandenberghe. 2004.
\newblock {\it Convex Optimization\/}.
\newblock Cambridge University Press.

\bibitem[{Caro and Gallien(2007)}]{Gallien07}
Caro, F., J.~Gallien. 2007.
\newblock Dynamic assortment with demand learning for seasonal consumer goods.
\newblock {\it Management Science\/} {\bf 53} 276--292.

\bibitem[{Dean et~al.(2004)Dean, Goemans, and Vondrak}]{GoemansDean00}
Dean, B.C., M.X. Goemans, J.~Vondrak. 2004.
\newblock Approximating the stochastic knapsack problem: The benefit of
  adaptivity.
\newblock {\it Proc. FOCS\/} {\bf 00} 208--217.

\bibitem[{Gittins(1989)}]{Gittins89}
Gittins, J.C. 1989.
\newblock {\it Multi-armed Bandit Allocation Indices\/}.
\newblock Wiley.

\bibitem[{Gittins and Jones(1974)}]{Gittins74}
Gittins, J.C., D.M. Jones. 1974.
\newblock A dynamic allocation index for the sequential design of experiments.
\newblock 241--266.

\bibitem[{Guha and Munagala(2007)}]{Munagala07}
Guha, S., K.~Munagala. 2007.
\newblock Approximation algorithms for budgeted learning problems.
\newblock {\it STOC '07: Proceedings of the thirty-ninth annual ACM symposium
  on Theory of computing\/}. ACM, New York, NY, USA, 104--113.

\bibitem[{Shor(1985)}]{Shor85}
Shor, N.Z. 1985.
\newblock {\it Minimization Methods for Non-differentiable Functions\/}.
\newblock Springer-Verlag.

\bibitem[{Weiss(1992)}]{Weiss92}
Weiss, G. 1992.
\newblock Turnpike optimality of smith's rule in parallel machines stochastic
  scheduling.
\newblock {\it Mathematics of Operations Research\/} {\bf 17}(2) 255--270.

\bibitem[{Whittle(1988)}]{Whittle88}
Whittle, P. 1988.
\newblock Restless bandits: Activity allocation in a changing world.
\newblock {\it Journal of Applied Probability\/} {\bf 25}.

\end{thebibliography}
}

\begin{appendix}
\section{Proofs for Section 3}
\begin{l1}
$OPT(RLP(\tilde{\pi}_0)) \geq OPT(LP(\tilde{\pi}_0))$
\end{l1}
\begin{proof}
Let $\hat{\pi}$ be an optimal solution to $LP(\tilde{\pi}_0)$. We construct a feasible solution to $RLP(\tilde{\pi}_0)$ of equal value. In particular, define a candidate solution to $RLP(\tilde{\pi}_0)$, $\bar{\pi}$ according to
\[
\bar{\pi}(s_i, a_i, t) = \sum_{\bar{s},\bar{a}:\bar{s}_i = s_i, \bar{a}_i = a_i}   \hat{\pi}(\bar{s},\bar{a},t)
\]
This solution has value precisely $OPT(LP(\tilde{\pi}_0))$. It remains to establish feasibility. For this we first observe that
\begin{equation}
\label{eq:con1}
\begin{split}
\sum_{s'_i,a'_i}P_i(s_i',a_i',s_i)\bar{\pi}_i(s_i',a_i',t-1)
&
=
\sum_{s'_i,a'_i}P_i(s_i',a_i',s_i) \sum_{\bar{s},\bar{a}:\bar{s}_i = s'_i, \bar{a}_i = a'_i}   \hat{\pi}(\bar{s},\bar{a},t-1)
\\
&
=
\sum_{s'_i,a'_i}P_i(s_i',a_i',s_i)
\sum_{\bar{s},\bar{a}:\bar{s}_i = s'_i, \bar{a}_i = a'_i}
\left(
\sum_{\hat{s}_{-i}}
\prod_{j \neq i} P(\bar{s}_j,\bar{a}_j,\hat{s}_j)
\right)
\hat{\pi}(\bar{s},\bar{a},t-1)
\\
&
=
\sum_{s',a',\hat{s}:\hat{s}_i = s_i} P(s',a',\hat{s})\hat{\pi}(s',a',t-1)
\\
&
=
\sum_{\hat{s}: \hat{s}_i = s_i,  a} \hat{\pi}(\hat{s},a,t)
\\
&
=
\sum_{a_i} \bar{\pi}_i(s_i,a_i,t)
\end{split}
\end{equation}

Next, we observe that the expected total number of pulls of arm pulls under the policy prescribed by $\hat{\pi}$ is simply
\[
\sum_i \sum_{s,t,a:a_i \neq \phi_i} \hat{\pi}(s,a,t)
\]
Since the total number of pulls in a given time step under $\hat{\pi}$ is at most $k$, we have
\[
\sum_i \sum_{s,t,a:a_i \neq \phi_i} \hat{\pi}(s,a,t) \leq kT
\]
But,
\[
\begin{split}
\sum_i \sum_{s,t,a:a_i \neq \phi_i} \hat{\pi}(s,a,t)
& =
\sum_i \sum_t \sum_{s_i, a_i \neq \phi_i} \bar{\pi}(s_i,a_i,t)
\\
&
=
\sum_i \sum_t \left( 1 - \sum_{s_i} \bar{\pi}(s_i,\phi_i,t)  \right)
\\
&
=
\sum_i \left( T - \sum_{s_i,t} \bar{\pi}(s_i,\phi_i,t)\right),
\end{split}
\]
so that
\begin{equation}
\label{eq:con2}
\sum_i \left( T - \sum_{s_i,t} \bar{\pi}(s_i,\phi_i,t)\right) \leq kT
\end{equation}

From \eqref{eq:con1} and \eqref{eq:con2},  $\bar{\pi}$ is indeed a feasible solution to $RLP(\tilde{\pi}_0)$. This completes the proof.
%$$\sum_t \sum_{s_i,a_i \neq \phi_i} \hat{\pi}(s_i,a_i,t),$$
%so that the total number of expected pulls over all arms is
%$$\sum_i \sum_t \sum_{s_i,a_i \neq \phi_i} \hat{\pi}(s_i,a_i,t).$$
%Since in every time step at most $k$ arms are pulled we must have that
%$$\sum_i \sum_t \sum_{s_i,a_i \neq \phi_i} \hat{\pi}(s_i,a_i,t) \leq kT.$$
%from which it follows immediately that
%$$
%\quad & \sum_i \left[ T - \sum_{s_i} \sum_t \pi_i(s_i,\phi_i,t)
%\right] \leq kT
%$$
\end{proof}

\section{Proofs for Section 4}

\begin{l2}
Coins satisfy the decreasing returns property. That is, if $\Ascr_i = \{p,\phi_i\} \ \forall i$, and
\[
r(s_i,p) \geq \sum_{s'_i \in \Sscr_i} P(s_i,p,s'_i)r(s'_i,p),
\quad \forall i, s_i \in \Sscr_i,
\]
then
\[
E[R_i^{m+1}] - E[R_i^{m}] \leq E[R_i^m] - E[R_i^{m-1}]
\]
for all $0< m<T$.
\end{l2}
\begin{proof}
To see that coins satisfy the decreasing returns property, we
first introduce some notation. It is clear that the policy
$\mu^{R}_i$ induces a Markov process on the state space $\Sscr_i$.
We expand this state space, so as to track the total number of arm
pulls so that our state space now become $\Sscr_i \times \{ \Tscr, T\}$. The
policy $\mu^{R}_i$ induces a distribution over arm $i$ states for
every time $t <T$, which we denote by the variable $\hat{\pi}$.
Thus, $\hat{\pi}(s_i,m,t,a_i)$ will denote the probability of being in
state $(s_i,m)$ at time $t$ and taking action $a_i$.

Now,
$$E[R_i^{m+1} - R_i^{m}] = \sum_{s,t<T} \hat{\pi}(s_i,m,t,p)r(s,p)$$
and similarly, for $E[R_i^{m} - R_i^{m-1}]$.

But,
\[
\begin{split}
\sum_{s_i,t<T} \hat{\pi}(s_i,m,t,p)r(s_i,p) & = \sum_{s_i,t<T-1}
\hat{\pi}(s_i,m-1,t,p) \left( \sum_{s_i'} P(s_i,p,s_i')g(s_i',t+1)r(s_i',p)
\right)
\\
& \leq \sum_{s_i,t<T-1} \hat{\pi}(s_i,m-1,t,p) \left( \sum_{s_i'}
P(s_i,p,s_i')r(s_i',p) \right)
\\
& \leq \sum_{s_i,t<T-1} \hat{\pi}(s_i,m-1,t,p)r(s_i,p)
\\
& \leq \sum_{s_i,t<T} \hat{\pi}(s_i,m-1,t,p)r(s_i,p)
\\
& = E[R_i^{m} - R_i^{m-1}]
\end{split}
\]
where $g(s_i,t) = 1- \prod_{t'=t}^{T-1}{\rm Pr}(\mu^{R}_i(s_i,t')
= \phi_i)$. Here $\prod_{t'=t}^{T-1}{\rm Pr}(\mu^{R}_i(s_i,t') =
\phi_i)$ is the probability of never pulling the arm after reaching
state $s_i$ at time $t$ so that $g(s_i,t)$ represents the probability of eventually pulling arm $i$ after reaching state $s_i$ at time $t$.
The second inequality follows from the assumption on reward structure in the statement of the Lemma. We thus see that coins satisfy the decreasing returns property.
\end{proof}

\begin{l5}
For bandits satisfying the decreasing returns property (Property \ref{as:dec_returns}),
\[
E\left[\sum_{i=1}^{H^*}\mathbf{1}_{\sum_{j=1}^{i-1}T_j < kT/2 }
R_i^{T/2}\right] \geq \frac{1}{2}E[R_{1/2}].
\]
\end{l5}
\begin{proof}

We note that assuming Property \ref{as:dec_returns} implies that $E[R_i^{T/2}]
\geq \frac{1}{2}E[R_i]$ for all $i$. The assertion of the Lemma is
then evident -- in particular,
\[
\begin{split}
E[R_{1/2}] & = \sum_{i=1}^{H^*}{\rm Pr}\left(\sum _{j=1}^{i-1} T_j <
kT/2\right)E[R_i]
\\
& \leq \sum_{i=1}^{H^*}{\rm Pr}\left(\sum _{j=1}^{i-1} T_j <
kT/2\right)2E[R_i^{T/2}]
\\
& = 2 E\left[\sum_{i=1}^{H^*}\mathbf{1}_{\sum_{j=1}^{i-1}T_j < kT/2
} R_i^{T/2}\right]
\end{split}
\]
where the first and second equality use the fact that $R_i$ and $R_i^{T/2}$ are each independent of $T_j$ for $j \neq i$.
\end{proof}

\section{Proof of Theorem \ref{th:solver_correctness}}

The following lemma shows that the optimal objective function value
of the dual is equal to $OPT(RLP(\tilde{\pi}_0))$. In particular, it
shows that Slater's constraint qualification condition holds~(see,
for example, \cite{Boyd04}).
\begin{lemma}
\label{lem:slater}
 $OPT(RLP(\tilde{\pi}_0))=
OPT(DRLP(\tilde{\pi}_0))$. That is, strong duality holds.
\end{lemma}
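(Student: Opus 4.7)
The plan is to verify Slater's constraint qualification condition as the lemma statement suggests, which will then give strong duality via standard convex optimization theory. Since $RLP(\tilde{\pi}_0)$ is a linear program in the variables $\{\pi_i\}$ over the product of polytopes $D_i(\tilde{\pi}_0)$, with only one genuine coupling (inequality) constraint $\sum_i T_i(\pi_i) \leq kT$ being dualized, it suffices to exhibit a primal point that lies in $\prod_i D_i(\tilde{\pi}_0)$ and satisfies the coupling constraint \emph{strictly}.

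First, I would construct the candidate strictly feasible point explicitly. Take $\pi_i^{\rm idle}$ to be the state-action frequencies induced by the pure idling policy on arm $i$ starting from $\tilde{\pi}_0$: that is, $\pi_i^{\rm idle}(s_i,\phi_i,t)$ equals the marginal probability that arm $i$ is in state $s_i$ at time $t$ under the initial distribution when only $\phi_i$ is ever chosen, and $\pi_i^{\rm idle}(s_i,a_i,t)=0$ for $a_i \neq \phi_i$. The idling dynamics $P_i(s_i,\phi_i,s_i)=1$ together with the initial-distribution constraint immediately verify the flow equations defining $D_i(\tilde{\pi}_0)$, so $\pi_i^{\rm idle}\in D_i(\tilde{\pi}_0)$ for every $i$. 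By construction, $T_i(\pi_i^{\rm idle}) = T - \sum_{s_i,t}\pi_i^{\rm idle}(s_i,\phi_i,t) = T - T = 0$, whence $\sum_i T_i(\pi_i^{\rm idle}) = 0 < kT$ (assuming, as we implicitly do throughout, $k\geq 1$ and $T\geq 1$).

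Next, I would invoke Slater's theorem for convex programs (as in Boyd--Vandenberghe, Section 5.2.3): for a convex optimization problem with affine equality constraints and convex inequality constraints, the existence of a feasible point that satisfies each nonaffine inequality strictly (and here the coupling inequality is even affine, so strictness is not strictly required, but we have it anyway) implies zero duality gap and, provided the primal optimum is finite, attainment of the dual. In our setting the equality constraints (the per-arm flow conservation and initial distribution constraints) are affine, the coupling constraint is affine and satisfied strictly by the point above, the objective is linear (hence concave for a max), and the primal optimum is finite because rewards are bounded and the horizon is finite. All hypotheses of Slater's condition are therefore met.

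The main subtlety, rather than an obstacle, is simply making sure that the constructed $\pi^{\rm idle}$ genuinely sits in each polytope $D_i(\tilde{\pi}_0)$ as defined implicitly by the per-arm constraints of $RLP(\tilde{\pi}_0)$; once that is checked, Slater's condition yields $OPT(RLP(\tilde{\pi}_0)) = OPT(DRLP(\tilde{\pi}_0))$ directly. (As an alternative route, one could bypass Slater entirely and invoke strong duality for linear programs applied to the partial Lagrangian relaxation of the coupling constraint, since the primal is feasible with finite optimum; but the Slater path matches the framing already adopted in the lemma statement.)
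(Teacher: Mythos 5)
Your proposal is correct and follows essentially the same route as the paper: both exhibit the pure idling policy's state-action frequencies $\pi_i(s_i,\phi_i,t)=\tilde{\pi}_{0,i}(s_i)$ as a point of $\prod_i D_i(\tilde{\pi}_0)$ with $\sum_i T_i(\pi_i)=0<kT$, and then invoke Slater's condition to conclude strong duality. Your additional checks (verifying the flow equations explicitly, noting finiteness of the optimum) only make explicit what the paper leaves implicit.
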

\begin{proof}
To show this, it is sufficient to show that there is a strictly
feasible solution to~(\ref{eqn:mod_rlp}), i.e., the inequality is
satisfied strictly. This is straightforward -- in particular, for
each bandit $i$, set $\pi_i(s_i,\phi_i,t) = \tilde{\pi}_{0,i}(s_i)$
for all $s_i$ and $t$,  where $\tilde{\pi}_{0,i}(s_i)$ is the
probability of bandit $i$ starting in state $s_i$. Set
$\pi_i(s_i,a_i,t)=0$ for $a_i\neq \phi_i$ for all $s_i,t$.  These
state action frequencies belong to $D_i(\tilde{\pi}_0)$, and also
give $T_i(\pi_i)=0$.
\end{proof}

We denote $R^* = OPT(RLP(\tilde{\pi}_0))= OPT(DRLP(\tilde{\pi}_0))$.
Also, define the following set of total running times for all
bandits corresponding to a dual variable $\lambda$:
$$\mathcal{T}(\lambda) = \left\{ \sum_{i}T_i(\pi_i) \left \vert \pi_i \in
\argmax_{\pi_i}\left(R_i(\pi_i)-\lambda T_i(\pi_i) \right) \right),~
\forall i\right\}.$$

\begin{lemma}
\label{lem:monotone}
 If $0\leq\lambda_1 < \lambda_2$, then
\[ \min \mathcal{T}(\lambda_1) \geq \max \mathcal{T}(\lambda_2) .\]
\end{lemma}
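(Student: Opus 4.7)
The plan is to exploit the standard ``revealed preference'' argument for parametric optimization: if $\pi_i^{(1)}$ maximizes $R_i(\pi_i) - \lambda_1 T_i(\pi_i)$ over $D_i(\tilde{\pi}_0)$ and $\pi_i^{(2)}$ maximizes $R_i(\pi_i) - \lambda_2 T_i(\pi_i)$ over the same domain, then each is weakly preferred to the other at its own price, and summing the two inequalities must produce a monotonicity relation between $T_i(\pi_i^{(1)})$ and $T_i(\pi_i^{(2)})$.

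Concretely, I would first fix an arbitrary bandit $i$ and any two maximizers $\pi_i^{(1)} \in \argmax (R_i(\pi_i) - \lambda_1 T_i(\pi_i))$ and $\pi_i^{(2)} \in \argmax (R_i(\pi_i) - \lambda_2 T_i(\pi_i))$. Optimality of $\pi_i^{(1)}$ at price $\lambda_1$ gives $R_i(\pi_i^{(1)}) - \lambda_1 T_i(\pi_i^{(1)}) \geq R_i(\pi_i^{(2)}) - \lambda_1 T_i(\pi_i^{(2)})$, and symmetrically for $\pi_i^{(2)}$ at price $\lambda_2$. Adding these two inequalities cancels the $R_i$ terms and yields $(\lambda_2 - \lambda_1)\bigl(T_i(\pi_i^{(1)}) - T_i(\pi_i^{(2)})\bigr) \geq 0$. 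Since $\lambda_2 > \lambda_1$ by hypothesis, this forces $T_i(\pi_i^{(1)}) \geq T_i(\pi_i^{(2)})$ for every pair of maximizers.

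Summing this per-arm inequality over $i = 1,\dots,n$ shows that for \emph{any} selection of maximizers at $\lambda_1$ and at $\lambda_2$, the total running time at $\lambda_1$ weakly exceeds the total running time at $\lambda_2$. Because the choice of maximizers was arbitrary on both sides, I can in particular pick the selection on the left that achieves $\min \mathcal{T}(\lambda_1)$ and the selection on the right that achieves $\max \mathcal{T}(\lambda_2)$, giving the desired $\min \mathcal{T}(\lambda_1) \geq \max \mathcal{T}(\lambda_2)$.

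I do not expect any real obstacle here: the argument is a textbook submodular/supermodular parametric-comparative-statics manipulation, and the only thing to be slightly careful about is that $\mathcal{T}(\lambda)$ may not be a singleton (the argmax set at a given $\lambda$ can be multi-valued, precisely because of the non-differentiability of the dual function emphasized earlier), which is why the statement is phrased with $\min$ on one side and $\max$ on the other. The per-arm inequality derived above is uniform over the choice of maximizer, so this potential multiplicity causes no trouble; non-emptiness of $\mathcal{T}(\lambda)$ follows from the fact that each subproblem $\max_{\pi_i \in D_i(\tilde{\pi}_0)}(R_i(\pi_i) - \lambda T_i(\pi_i))$ is a linear program over a nonempty bounded polytope and hence attains its maximum.
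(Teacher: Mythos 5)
Your argument is correct, and it reaches the conclusion by a genuinely different route than the paper. The paper proves the lemma through the Lagrangian dual function $g(\lambda) = \lambda kT + \sum_i \max_{\pi_i}(R_i(\pi_i) - \lambda T_i(\pi_i))$: it observes that $kT - t$ is a subgradient of $g$ at $\lambda$ for every $t \in \mathcal{T}(\lambda)$, and then invokes monotonicity of the subdifferential of a convex function to conclude $kT - t_1 \leq kT - t_2$ for all $t_1 \in \mathcal{T}(\lambda_1)$, $t_2 \in \mathcal{T}(\lambda_2)$. You instead run the revealed-preference exchange argument directly on each arm's subproblem, obtaining the per-arm inequality $T_i(\pi_i^{(1)}) \geq T_i(\pi_i^{(2)})$ uniformly over all pairs of maximizers, and then sum over $i$. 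The two arguments rest on the same underlying monotonicity principle (comparative statics of parametric maximization is exactly what makes the subdifferential monotone), but yours is more elementary and self-contained: it needs no reference to the dual function, its convexity, or the subgradient characterization the paper cites, and it delivers the slightly stronger arm-by-arm conclusion rather than only the aggregate one. The paper's version buys brevity once the subgradient fact is on the table, and fits naturally with the surrounding duality machinery it needs anyway for Lemmas~\ref{lem:zero} and~\ref{lem:positive}. Your closing remarks correctly identify the one point requiring care --- multi-valuedness of the argmax sets --- and your per-arm inequality, being uniform over maximizers, handles it; together with compactness of each $\argmax$ set (a face of a bounded polytope), the $\min$ and $\max$ in the statement are attained and the conclusion follows.
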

\begin{proof}
We denote the objective function in $DRLP(\tilde{\pi}_0)$, i.e., the
dual function by:
\[g(\lambda) =  \lambda kT + \sum_i\max_{\pi_i}\left(R_i(\pi_i) - \lambda T_i(\pi_i)
    \right).\]
 The slack in the total running time constraint
$\sum_{i}T_i(\pi_i)\leq kT$, i.e. $kT - \sum_i T(\pi_i)$, is a subgradient of $g$ for any $\pi$
such that $\pi_i\in \argmax_{\pi_i}\left(R_i(\pi_i)-\lambda
T_i(\pi_i)\right)$~(see \cite{Shor85}). Thus, the set of
subgradients of the dual function $g$ at $\lambda$ are given by
\[ \partial g(\lambda) = \{k T - t   : t\in \mathcal{T}(\lambda)\}.\]
Then, since $g$ is a convex function, it follows that for
$0\leq\lambda_1 < \lambda_2$,
\[kT - t_1 \leq kT - t_2, \qquad \forall ~ t_1 \in \mathcal{T}(\lambda_1),~t_2 \in \mathcal{T}(\lambda_2).\]
The lemma then follows.
\end{proof}

$(\pi^*,\lambda^*)$ is an optimal solution for the primal and dual
problems if and only if~(see, for example, \cite{Boyd04})
\begin{equation}
\label{eqn:opt}
\begin{aligned}
&\pi_i^* \in \argmax_{\pi_i}\left(R_i(\pi_i)-\lambda^* T_i(\pi_i)\right ),\\
&\text{either}\quad\lambda^*>0 \text{ and }\sum_{i}T_i(\pi^*_i) =
kT, \quad \text{or} \quad \lambda^* = 0 \text{ and }
\sum_{i}T_i(\pi^*_i) \leq kT.
\end{aligned}
\end{equation}

We prove the correctness of the $RLP$ Solver algorithm separately
for the cases when $\lambda^*=0$ is optimal, and when any optimal
solution satisfies $\lambda^*>0$. We denote the values of the bounds
on the dual variable that are computed by the \emph{last iteration}
of the $RLP$ solver algorithm by $\lambda^{\rm feas}$ and
$\lambda^{\rm infeas}$. Recall that,
$$
\begin{aligned}
\pi_i^{\rm feas} &\in \argmax_{\pi_i}\left( R_i(\pi_i) -
\lambda^{\rm feas}T_i(\pi_i)\right),\\
\pi_i^{\rm infeas} &\in \argmax_{\pi_i}\left( R_i(\pi_i) -
\lambda^{\rm infeas}T_i(\pi_i)\right).
\end{aligned}
$$
We introduce some additional notation:
$$
\begin{aligned}
T^{\rm feas} &= \sum_i T_i(\pi_i^{\rm feas}), \qquad R^{\rm feas} = \sum_i R_i(\pi_i^{\rm feas}), \\
T^{\rm infeas} &= \sum_i T_i(\pi_i^{\rm infeas}),\quad R^{\rm
infeas} = \sum_i R_i(\pi_i^{\rm infeas}).
\end{aligned}
$$
Thus,
\begin{equation}
\label{dual_fn}
\begin{aligned}
g(\lambda^{\rm feas})&=  \lambda^{\rm feas}kT + R^{\rm feas} -
\lambda^{\rm
feas}T^{\rm feas},\\
g(\lambda^{\rm infeas})&=  \lambda^{\rm infeas}kT + R^{\rm infeas} -
\lambda^{\rm
feas}T^{\rm infeas}.\\
\end{aligned}
\end{equation}

%Since, $T^{\rm f}\leq kT$ and $T^{\rm infeas} \geq
%$$R^{\rm infeas}\geq V^* \quad \text{and} \quad R^{\rm f}\leq
%V^*.$$

\begin{lemma}
\label{lem:zero} If $(\pi^*,\lambda^*)$ is a solution to
\eqref{eqn:opt} with $\lambda^* = 0$, then
$$\smash{R^* - \left(\alpha R^{\rm infeas} +
(1-\alpha)R^{\rm feas}\right) \leq \epsilon.}$$
\end{lemma}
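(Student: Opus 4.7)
The plan is to exploit the hypothesis $\lambda^{*}=0$ to pin down $R^{\rm infeas}$ and $\lambda^{\rm feas}$ simultaneously, after which the desired bound falls out of weak duality applied to $\lambda^{\rm feas}$. The central observation I would rely on is that when $\lambda^{*}=0$ is dual-optimal, the algorithm's lower bracket $\lambda^{\rm infeas}$ never moves off its initial value $0$; consequently $\pi^{\rm infeas}_i$ remains an unconstrained maximizer of $R_i(\cdot)$, and $R^{\rm infeas}$ equals $g(0)=R^{*}$ by strong duality.

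To establish the ``$\lambda^{\rm infeas}$ stays at $0$'' claim, I would combine the optimality conditions~\eqref{eqn:opt} with the monotonicity lemma. Since $(\pi^{*},0)$ solves~\eqref{eqn:opt}, we have $\pi^{*}_i\in\arg\max_{\pi_i}R_i(\pi_i)$ for every $i$ and $\sum_i T_i(\pi^{*}_i)\le kT$, so $\min\mathcal{T}(0)\le kT$. Lemma~\ref{lem:monotone} then yields $\max\mathcal{T}(\lambda)\le\min\mathcal{T}(0)\le kT$ for every $\lambda>0$, meaning that \emph{every} maximizer of $R_i(\pi_i)-\lambda T_i(\pi_i)$ is automatically feasible. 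Since every trial $\lambda$ inside the bisection loop is strictly positive (it is the midpoint of $[\lambda^{\rm infeas},\lambda^{\rm feas}]$), the test $\sum_i T_i(\pi^{*}_i)>kT$ always fails and only the $\lambda^{\rm feas}$ bracket is ever updated. At termination, $\lambda^{\rm feas}\le\lambda^{\rm infeas}+\epsilon/(kT)=\epsilon/(kT)$ and $\pi^{\rm infeas}_i\in\arg\max_{\pi_i}R_i(\pi_i)$; the latter, read off the top line of~\eqref{dual_fn} at $\lambda=0$ together with Lemma~\ref{lem:slater}, gives $R^{\rm infeas}=g(0)=R^{*}$.

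The remaining step is a one-line weak-duality bound on $R^{\rm feas}$. From~\eqref{dual_fn} and $g(\lambda^{\rm feas})\ge R^{*}$, together with $T^{\rm feas}\ge 0$, I get $R^{\rm feas}\ge R^{*}-\lambda^{\rm feas}(kT-T^{\rm feas})\ge R^{*}-\lambda^{\rm feas}\cdot kT\ge R^{*}-\epsilon$. Taking the convex combination produced by the algorithm then yields
\[
\alpha R^{\rm infeas}+(1-\alpha)R^{\rm feas}\;\ge\;\alpha R^{*}+(1-\alpha)(R^{*}-\epsilon)\;\ge\;R^{*}-\epsilon,
\]
which rearranges to the inequality in the lemma. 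The only delicate point I expect is ruling out $\lambda^{\rm infeas}>0$ at termination: this is exactly where the hypothesis $\lambda^{*}=0$ bites, via Lemma~\ref{lem:monotone}, because without monotonicity one could not exclude an intermediate bisection trial whose particular $\arg\max$ representative happens to be infeasible even though a feasible one exists at $\lambda=0$. Everything else is a routine combination of weak duality and the stopping criterion.
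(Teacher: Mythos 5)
Your proposal is correct and follows essentially the same route as the paper: you use Lemma~\ref{lem:monotone} together with the optimality conditions \eqref{eqn:opt} to show that the lower bracket never leaves $0$, conclude $R^{\rm infeas}=g(0)=R^{*}$ via Lemma~\ref{lem:slater}, and then bound the convex combination using the dual-function identity \eqref{dual_fn}. The only difference is that your single weak-duality estimate $R^{\rm feas}\ge R^{*}-\lambda^{\rm feas}\left(kT-T^{\rm feas}\right)\ge R^{*}-\epsilon$ subsumes the paper's three-case analysis (on $T^{\rm infeas}\le kT$, $T^{\rm feas}=kT$, and $T^{\rm infeas}>kT>T^{\rm feas}$) in one step, which is a modest but valid simplification.
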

\begin{proof}
 If $\lambda^*=0$, it follows from \eqref{eqn:opt} that there is some $t\in \mathcal{T}(0)$ such that
$t\leq kT$.  Hence, it follows from Lemma~\ref{lem:monotone} that
for any $\lambda>0$, $\max \mathcal{T}(\lambda) \leq kT$. Hence,
Line~11 of the $RLP$ solver algorithm is always invoked, and so, the
$RLP$ solver algorithm converges to
\[\begin{aligned}
\smash{\lambda^{\rm infeas} = 0 \quad \text{and}\quad 0<\lambda^{\rm
feas}<\epsilon/(kT).}
\end{aligned}\]
Hence, $\pi_i^{\rm infeas} \in \argmax_{\pi_i}(R_i(\pi_i))$. Also,
$g(\lambda)$ is minimized at $\lambda^* = 0$. Hence, it follows from
Lemma~\ref{lem:slater} that
\begin{equation}
\label{eqn:Rstar=Rinfeas} \smash{R^* = g(0)= \sum_i \max_{\pi_i}
R_i(\pi_i) = R^{\rm infeas}.}
\end{equation}
Since, $\lambda^{\rm feas}>0$, it follows from
 $T^{\rm feas}\leq kT$. Hence, we now consider the following three cases:
\begin{itemize}
\item {\emph{Case 1:}}
$T^{\rm infeas}\leq kT$.\\ Here, $\alpha =1$, and hence, using
\eqref{eqn:Rstar=Rinfeas} it follows that
$$\smash{R^* - \left(\alpha R^{\rm infeas} +
(1-\alpha)R^{\rm feas}\right) = 0.}$$

\item {\emph{Case 2:}} $T^{\rm feas} = kT$.\\
In this case, $(\pi^{\rm feas}, \lambda^{\rm feas})$ satisfy the
optimality conditions in \eqref{eqn:opt}. Thus, $R^{\rm feas} =
R^*$, and so (since $R^{\rm infeas}= R^*$ by
\eqref{eqn:Rstar=Rinfeas})
$$ R^* - \left(\alpha R^{\rm infeas} + (1-\alpha)R^{\rm feas}\right)
= 0.$$
\item {\emph{Case 3:}}
$T^{\rm infeas}>kT>T^{\rm feas}$.\\
Since, $g(\lambda)$ is minimized at $\lambda = 0$,
\[
\begin{aligned}
R^*= g(0)\leq g(\lambda^{\rm feas})&=  \lambda^{\rm feas}kT + R^{\rm
feas} - \lambda^{\rm
feas}T^{\rm feas}\\
\Rightarrow R^* - R^{\rm feas} &\leq \lambda^{\rm feas}\left(kT -
T^{\rm feas}\right).
\end{aligned}
\]
Since, $R^*=R^{\rm infeas}$ (from \eqref{eqn:Rstar=Rinfeas}), and
using the fact that $0<\alpha<1$ when $T^{\rm infeas}>kT>T^{\rm
feas}$, we have
\begin{equation}
\label{eq:lambda=0}
\begin{split}
R^* - \alpha R^{\rm infeas} - (1-\alpha)R^{\rm feas} &=
(1-\alpha)(R^* - R^{\rm feas})\\
&\leq (1-\alpha)(kT-T^{\rm feas})\lambda^{\rm feas}\\
%&= \frac{(T^{\rm infeas}-kT)(kT-T^{\rm feas})}{T^{\rm infeas}-
%T^{\rmfeas}}\lambda^{\rm feas} \\
&
%\stackrel{(a)}
\leq kT \lambda^{\rm feas}\\
&\leq \epsilon,\\
\end{split}
\end{equation}

\end{itemize}
\end{proof}

\begin{lemma}
\label{lem:positive}
If every solution to \eqref{eqn:opt} satisfies
$\lambda^*
> 0$, then
$$R^* - \left(\alpha R^{\rm infeas} + (1-\alpha)R^{\rm feas}\right)
\leq 2\epsilon.$$
\end{lemma}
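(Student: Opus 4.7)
The plan is to show that the convex combination $\pi^{\rm RLP}_i = \alpha \pi^{\rm infeas}_i + (1-\alpha)\pi^{\rm feas}_i$ is feasible for $RLP(\tilde{\pi}_0)$ with value within $\epsilon$ (hence $2\epsilon$) of $R^*$, by combining (i) subgradient inequalities at $\lambda^{\rm feas}$ and $\lambda^{\rm infeas}$ for the Lagrangian, (ii) complementary slackness $\sum_i T_i(\pi_i^*) = kT$ at an optimal primal $\pi^*$ (which holds because $\lambda^*>0$), and (iii) the specific choice of $\alpha$ that cancels the two resulting error terms against each other.

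First I would record the algorithm's termination invariants: $\lambda^{\rm infeas} < \lambda^{\rm feas}$ with $\lambda^{\rm feas} - \lambda^{\rm infeas} \le \epsilon/(kT)$, $T^{\rm infeas} > kT$, and $T^{\rm feas} \le kT$. I would also argue (using Lemma~\ref{lem:monotone} and the assumption that every optimal $\lambda^* > 0$) that $T^{\rm infeas} > kT$ strictly throughout the algorithm: otherwise $\lambda=0$ would already give a feasible solution with value $g(0)$, and combined with Lemma~\ref{lem:slater} this would force $\lambda^* = 0$ to be optimal, contradicting the hypothesis. The edge case $T^{\rm feas}=kT$ is easy: by construction $(\pi^{\rm feas},\lambda^{\rm feas})$ then satisfies the KKT conditions \eqref{eqn:opt}, so $R^{\rm feas}=R^*$; and the formula defining $\alpha$ gives $\alpha=0$, so the combined solution equals $\pi^{\rm feas}$ and the bound is trivial.

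The main case is $T^{\rm feas} < kT < T^{\rm infeas}$, so $\alpha \in (0,1)$. Let $\pi^*$ be any optimal primal solution; by complementary slackness (since $\lambda^*>0$), $\sum_i T_i(\pi_i^*)=kT$. Using that $\pi^{\rm feas}_i$ is an $\argmax$ of $R_i(\pi_i)-\lambda^{\rm feas}T_i(\pi_i)$ gives, upon summing over $i$,
\[
R^{\rm feas} - \lambda^{\rm feas} T^{\rm feas} \;\ge\; R^* - \lambda^{\rm feas} \sum_i T_i(\pi_i^*) \;=\; R^* - \lambda^{\rm feas}\, kT,
\]
i.e.\ $R^{\rm feas} \ge R^* - \lambda^{\rm feas}(kT - T^{\rm feas})$. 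The symmetric argument at $\lambda^{\rm infeas}$ yields $R^{\rm infeas} \ge R^* + \lambda^{\rm infeas}(T^{\rm infeas}-kT)$.

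Now I would form the convex combination. The definition $\alpha = (kT-T^{\rm feas})/(T^{\rm infeas}-T^{\rm feas})$ gives the key identity $\alpha(T^{\rm infeas}-kT) = (1-\alpha)(kT-T^{\rm feas})$; call this common value $X$. Combining the two lower bounds,
\[
\alpha R^{\rm infeas} + (1-\alpha) R^{\rm feas} \;\ge\; R^* + \alpha\lambda^{\rm infeas}(T^{\rm infeas}-kT) - (1-\alpha)\lambda^{\rm feas}(kT-T^{\rm feas}) \;=\; R^* - X(\lambda^{\rm feas} - \lambda^{\rm infeas}).
\]
Since $X = (1-\alpha)(kT-T^{\rm feas}) \le kT - T^{\rm feas} \le kT$, and $\lambda^{\rm feas}-\lambda^{\rm infeas} \le \epsilon/(kT)$, this gives $R^* - (\alpha R^{\rm infeas}+(1-\alpha)R^{\rm feas}) \le \epsilon \le 2\epsilon$. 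Feasibility of $\pi^{\rm RLP}$ follows from convexity of each polytope $D_i(\tilde{\pi}_0)$ and the identity $\alpha T^{\rm infeas} + (1-\alpha)T^{\rm feas}=kT$, while linearity of $R_i(\cdot)$ ensures its value is exactly $\alpha R^{\rm infeas}+(1-\alpha) R^{\rm feas}$.

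The main obstacle I anticipate is not any single computation but rather verifying that the termination hypothesis really guarantees $T^{\rm infeas}>kT$ and $\lambda^{\rm infeas}<\lambda^*\le \lambda^{\rm feas}$ throughout; without these, the subgradient inequalities cannot be pointed in the right direction to cancel via the harmonic-mean-type bound on $X$. Everything else is essentially algebraic manipulation once the correct identity $\alpha(T^{\rm infeas}-kT)=(1-\alpha)(kT-T^{\rm feas})$ is recognized as the reason the bisection gap translates into a primal suboptimality gap.
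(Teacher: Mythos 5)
Your proposal is correct and follows essentially the same route as the paper: the same termination invariants ($T^{\rm feas}\leq kT < T^{\rm infeas}$, $\lambda^{\rm infeas}\leq\lambda^*\leq\lambda^{\rm feas}$), the same edge case $T^{\rm feas}=kT$, the same weak-duality/subgradient inequalities at $\lambda^{\rm feas}$ and $\lambda^{\rm infeas}$ (your $R^{\rm feas}\geq R^*-\lambda^{\rm feas}(kT-T^{\rm feas})$ is exactly $R^*\leq g(\lambda^{\rm feas})$), and the same cancellation via $\alpha(T^{\rm infeas}-kT)=(1-\alpha)(kT-T^{\rm feas})$. The only difference is cosmetic but to your credit: by using the termination gap $\lambda^{\rm feas}-\lambda^{\rm infeas}\leq\epsilon/(kT)$ directly rather than sandwiching both multipliers within $\delta$ of $\lambda^*$ as the paper does, you obtain the sharper bound $\epsilon$ in place of $2\epsilon$.
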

\begin{proof}
The $RLP$ solver algorithm is initialized with $\lambda^{\rm infeas}
= 0$. Since, $\lambda^*>0$, and $(kT)\in \mathcal{T}(\lambda^*)$
(\eqref{eqn:opt}), it follows from Lemma~\ref{lem:monotone} that
$\min \mathcal{T}(0)\geq kT$. But $(kT)\notin \mathcal{T}(0)$, else
there would be a solution to \eqref{eqn:opt} that satisfies
$\lambda^* = 0$, leading to a contradiction. Thus, $\min
\mathcal{T}(0)> kT$, and so lines 8--12 of the $RLP$ solver
algorithm guarantee that
\begin{equation}
\label{eqn:infeas} T^{\rm infeas}> kT.
\end{equation}
Using an appropriate modification of the optimality conditions in
\eqref{eqn:opt} for the case where the horizon is $T^{\rm infeas}$
(instead of $kT$), we see that $R^{\rm infeas}$ is the maximum
reward earned by any policy in $\{\pi : \sum_{i}T_i(\pi_i)\leq
T^{\rm infeas} \}$.
 Since, $R^*$ is the maximum reward earned by any policy in $\{\pi
: \sum_{i}T_i(\pi_i)\leq kT < T^{\rm infeas}\}$,
\begin{equation}
\label{eqn:infeas2}
 R^{\rm infeas}\geq R^*.
\end{equation}

We now argue that $T^{\rm feas} \leq kT$. The $RLP$ solver algorithm
is initialized with $\lambda^{\rm feas} > r_{\text max}$. Since,
$\pi_i^{\rm feas} \in \argmax_{\pi_i}\left( R_i(\pi_i) -
\lambda^{\rm feas}T_i(\pi_i)\right)$, initially, the optimal policy
is to idle at all times. Thus, $T^{\rm feas} \leq kT$ at
initialization; at all other iterations, lines 8--12 of the
algorithm ensure that $T^{\rm feas} \leq kT$.

We now consider the following two cases separately:

\begin{itemize}
\item{\em{Case 1:}} $T^{\rm feas} = kT$.\\
%Note that $\lambda^{\rm feas} >0$ (from $RLP$ solver algorithm).
%Hence, in
In this case, $(\pi^{\rm feas}, \lambda^{\rm feas})$ satisfy the
optimality conditions in \eqref{eqn:opt}, and so, $R^{\rm feas} =
R^*$. Now, using \eqref{eqn:infeas2}
\[ \left(\alpha R^{\rm infeas} +
(1-\alpha)R^{\rm feas}\right) \geq R^*.\]

\item{\em{Case 2:}} $T^{\rm feas}< kT$.\\
 Note that the $RLP$ solver
algorithm terminates when
\begin{equation}
\label{eqn:conv1}
\lambda^{\rm feas} - \lambda^{\rm infeas} <
\epsilon/(kT).
\end{equation}

Now $T^{\rm feas}< kT$ and $(kT)\in \mathcal{T}(\lambda^*)$. If
$\lambda^{\rm feas} <   \lambda^*$, it follows from
Lemma~\ref{lem:monotone} that
$$T^{\rm feas} \geq \min \mathcal{T}(\lambda^{\rm feas}) \geq
\max\mathcal{T}(\lambda^*) \geq kT,$$ which is a contradiction.
Hence,
\begin{equation}
\label{eqn:conv2} \lambda^{\rm feas} \geq   \lambda^*.
\end{equation}

Also, since $(kT)\in \mathcal{T}(\lambda^*)$, it follows from
Lemma~\ref{lem:monotone} that for any $\lambda > \lambda^*$,
$\max\mathcal{T}({\lambda}) \leq kT$. So, \eqref{eqn:infeas} implies
that
\begin{equation}
\label{eqn:conv3} \lambda^{\rm infeas} \leq   \lambda^*.
\end{equation}

It follows from
\eqref{eqn:conv1},\eqref{eqn:conv2},\eqref{eqn:conv3} that
\[ \max\left(0,\lambda^* - \frac{\epsilon}{kT}\right) \leq \lambda^{\rm infeas} \quad
\text{and}\quad  \lambda^{\rm feas} \leq \lambda^* +
\frac{\epsilon}{kT}.\]

%If $\lambda^*<\epsilon/(kT)$, then the algorithm converges to
%$\lambda^{\rm feas}< 2\epsilon/(kT)$, and the analysis is similar to
%the previous Lemma for the case $\lambda^*=0$. Specifically, using
%$R^*\leq R^{\rm infeas}$ (\eqref{eqn:infeas2}) and the steps in
%\eqref{eq:lambda=0}, we get the desired result.

%Hence, WLOG, we now consider $\lambda^* - \epsilon/(kT)>0$.

Since $g(\lambda)$ is minimized at $\lambda^*$, it follows from
\eqref{dual_fn} and strong duality proved in Lemma~\ref{lem:slater}
that
\[\begin{aligned}
g(\lambda^*) = R^*   &\leq g(\lambda^{\rm feas}) = R^{\rm feas} +
\lambda^{\rm feas}\left(kT - T^{\rm feas}\right) \leq R^{\rm
feas}+(\lambda^* + \delta)\left(kT - T^{\rm
feas}\right),\\
g(\lambda^*) = R^* &\leq g(\lambda^{\rm infeas}) =R^{\rm infeas}
 + \lambda^{\rm infeas}\left( kT - T^{\rm infeas}
\right) \leq R^{\rm infeas} +  (\lambda^*-\delta)\left( kT - T^{\rm
infeas} \right),
\end{aligned}\]
where $\delta = \epsilon/(kT)$. Note that the above inequalities
also use $T^{\rm feas} < kT$ (by assumption) and $T^{\rm infeas}>kT$
(from \eqref{eqn:infeas}). Thus,
\[\begin{aligned}
R^* - \alpha R^{\rm infeas} - (1-\alpha)R^{\rm feas} &= \alpha (R^*
-R^{\rm infeas}) + (1-\alpha)(R^*-R^{\rm feas})\\
&\leq \alpha (\delta - \lambda^*)\left( T^{\rm infeas} - kT\right)
+(1-\alpha)(\lambda^* + \delta)\left(kT - T^{\rm feas}\right)
\\
&= 2 \frac{\delta (T^{\rm infeas}-kT)(kT-T^{\rm feas})}{T^{\rm
infeas}- T^{\rm feas}}\\
&\leq 2\delta kT\\
&= 2\epsilon.
\end{aligned}\]
\end{itemize}
\end{proof}

\begin{t2}
RLP Solver produce a feasible solution to $RLP(\tilde{\pi}_0)$ of
value at least $OPT(RLP(\tilde{\pi}_0)) - 2\epsilon$.
\end{t2}
\begin{proof}
The result follows from Lemmas~\ref{lem:zero}
and~\ref{lem:positive}, and the fact that $\lambda^*\geq 0$ (from
\eqref{eqn:opt}).
\end{proof}

\end{appendix}

\end{document}